\newcommand{\bx}{\mathbf{x}}
\newcommand{\by}{\mathbf{y}}
\newcommand{\cO}{\mathcal{O}} 
\newcommand{\cX}{\mathcal{X}} 
\newcommand{\cY}{\mathcal{Y}} 
\newcommand{\cZ}{\mathcal{Z}} 
\newcommand{\cS}{\mathcal{S}} 
\newcommand{\cA}{\mathcal{A}} 
\newcommand{\cB}{\mathcal{B}} 
\newcommand{\cP}{\mathcal{P}} 
\newcommand{\cL}{\mathcal{L}} 
\newcommand{\argmin}{\operatornamewithlimits{argmin}}
\newcommand{\argmax}{\operatornamewithlimits{argmax}}
\newcommand{\RR}{\mathbb{R}}
\newcommand{\R}{\mathbb{R}}
\newcommand{\tby}{\tilde{\by}}
\newtheorem{assumption}[theorem]{Assumption}
\title{Primal-Dual First-Order Methods for Affinely Constrained Multi-Block Saddle Point Problems} 
\author{
	Junyu Zhang\thanks{Department of Industrial Systems Engineering and Management, National University of Singapore, junyuz@nus.edu.sg}
	\and
	Mengdi Wang\thanks{Department of Electrical and Computer Engineering, Princeton University, mengdiw@princeton.edu}
	\and
	Mingyi Hong\thanks{Department of Electrical and Computer Engineering, University of Minnesota, mhong@umn.edu}
	\and
	Shuzhong Zhang\thanks{Department of Industrial and Systems Engineering, University of Minnesota, zhangs@umn.edu}
}
\begin{document}
\maketitle 

\begin{abstract} 
We consider the convex-concave saddle point problem $\min_{\bx}\max_{\by}\Phi(\bx,\by)$, where the decision variables $\bx $ and/or $\by$ are subject to certain multi-block structure and affine coupling constraints, and $\Phi(\bx,\by)$ possesses certain separable structure. Although the minimization counterpart of this problem has been widely studied under the topics of ADMM, this minimax problem is rarely investigated. In this paper, a convenient notion of $\epsilon$-saddle point is proposed, under which the convergence rate of several proposed algorithms are analyzed. When only one of $\bx$ and $\by$ has multiple blocks and affine constraint, several natural extensions of ADMM are proposed to solve the problem. Depending on the number of blocks and the level of smoothness, $\cO(1/T)$ or $\cO(1/\sqrt{T})$ convergence rates are derived for our algorithms. When both $\bx$ and $\by$ have multiple blocks and affine constraints, a new algorithm called \underline{E}xtra-\underline{G}radient \underline{M}ethod of \underline{M}ultipliers (EGMM) is proposed. Under desirable smoothness conditions, an $\cO(1/T)$ rate of convergence can be guaranteed regardless of the number of blocks in $\bx$ and $\by$. An in-depth comparison between EGMM (fully primal-dual method) and ADMM (approximate dual method) is made over the multi-block optimization problems to illustrate the advantage of the EGMM.
\end{abstract} 

\begin{keywords}
    saddle point problem, multi-block problem, affine constraints, primal-dual method, iteration complexity, first-order method 
\end{keywords}

\begin{AMS}
    68Q25,  
    90C30   
    90C25   
    90C47   
\end{AMS}

 
\section{Introduction}
In this paper, we consider the multi-block convex-concave minimax saddle point problems with affine coupling constraints: 
\begin{eqnarray}
	\label{prob:main}
	&\min_{\bx} \max_{\by} &\Phi(\bx,\by) := \sum_{i=1}^N h_i(x_i) + \Psi(\bx,\by) - \sum_{j=1}^M g_j(y_j) \nonumber\\
	&\mathrm{s.t.} & \bx = [x_1^\top,\cdots,x_N^\top]^\top,\quad x_i\in\cX_i\subset\RR^{d_x^i},\quad i = 1,2,\cdots,N,\\
	& & \by = [y_1^\top,\cdots,y_M^\top]^\top,\quad y_j\in\cY_j\subset\RR^{d_y^j},\quad j = 1,2,\cdots,M,\nonumber\\
	& & \!A_1x_1 +  \cdots + A_Nx_N = a,\!\qquad B_1y_1  + \cdots + B_My_M = b.\nonumber
\end{eqnarray}
In problem \eqref{prob:main}, $h_i$ and $g_j$ are simple convex functions that allow efficient proximal operator evaluation. The function $\Psi$ is a smooth convex-concave function that couples the multiple blocks of $\bx$ and $\by$ together. $\cX_i$ and  $\cY_j$ are compact convex sets for $\forall i,j$.  $A_i\in\RR^{n\times d_x^i}, B_j\in\RR^{m\times d_y^j}, \forall i,j$ are a group of matrices and $a\in\RR^{n}$, $b\in\RR^{m}$ are two vectors. The proposed problem lies in the conjunction of the affinely constrained multi-block optimization problem and the convex-concave saddle point problems, which are extensively studied in the alternating direction method of multipliers (ADMM) and the monotone variational inequality (VI) literature respectively. Many works on the saddle point problems do allow convex constraints on the variables. However, they usually assume an easy access to the projection operator to the constraint sets, which is hard to evaluate when there are multiple blocks of variables that are affinely constrained in addition to the convex set constraints. To our best knowledge, our paper is the first one to consider the saddle point problems with affine constraints. 
Though rarely studied in the literature, this problem has many potential applications.
Several motivating examples ranging from multi-agent Reinforcement Learning (RL) to game theory are listed below.\vspace{0.2cm}

\noindent\textbf{Team-collaborative RL.}$~$ Consider the team-collaborative RL setting, where the state space $\cS$ is the nodes in a network and is partitioned into $N$ clusters: $\cS = \cup_{i=1}^N\cS_i$. Each cluster $\cS_i$ has an agent to control its own policy, i.e., $\pi(\cdot|s)$ for $\forall s\in\cS_i$. Let $\cA,\cP$ and $\gamma\in(0,1)$ be the action space, transition probability, and discount factor respectively. Let $\mu^\pi_\xi\in\RR^{|\cS|\times|\cA|}$ be the state-action occupancy measure under some initial state distribution $\xi$ and policy $\pi$, then the goal of the agents is to collaboratively solve a general utility RL problem: $\pi^* = \argmax_{\pi}\sum_{i=1}r_i(\mu^\pi_\xi(\cS_i,:)) + \rho(\mu^\pi_\xi)$, where $r_i(\cdot)$ and $\rho(\cdot)$ are concave functions, see e.g. \cite{zhang2020variational}, and $\mu^\pi_\xi(\cS_i,:)$ stands for the submatrix of $\mu^\pi_\xi$ that includes the row index $s\in\cS_i$. 
To avoid the nonconvexity in terms of $\pi$,  we reformulate it as a convex-concave minimax occupancy optimization problem:\vspace{-0.1cm}
\begin{eqnarray}
	\label{prob:mdp}
	&&\mathop{\mathrm{maximize}}_{\mu\in\RR_+^{|\cS|\times|\cA|}}\,\,\mathop{\mathrm{minimize}}_{z\in\RR^{|\cS|\times|\cA|}}\,\, \sum_{i=1}^Nr_i\big(\mu(\cS_i,:)\big) + \langle\mu,z\rangle - \rho^*(z)\\
	&&\mathrm{s.t.}\quad \sum_{a\in\cA}\mu(s,a) = \gamma\sum_{s'\in\cS,a'\in\cA}\mu(s',a')P(s|s',a') + \xi(s),\,\,\,\forall s\in\cS,\nonumber
\end{eqnarray}
where the conjugate function $\rho^*$ of $\rho$ is used to decouple the multiple $\mu(\cS_i,:)$ blocks owned by each individual agent respectively.   \vspace{0.3cm}

\noindent\textbf{Resource constrained game.}$~$
The problem \eqref{prob:main} can also be interpreted as several game theory settings, including two-player multi-stage games and multi-player games. \vspace{-0.2cm}

\noindent{\bf (i)} (Two-player multi-stage game). Consider two players playing a sequence of $N-1$ games, with their strategies in the $i$-th stage denoted by $x_i\in\cX_i$ and $y_i\in\cY_i$ respectively. The minimax objective function at stage $i$ takes the form $h_i(x_i) + \Psi_i(\bx,\by) - g_i(y_i)$.  After taking the strategies $x_i$ and $y_i$,  the two players will incur a resource cost of $A_ix_i$ and $B_iy_i$ respectively. The total resource of the two players are limited by two vectors $a$ and $b$ respectively. Therefore, the problem can be written as 
\begin{eqnarray}
	\label{prob:game}
	&&\mathop{\mathrm{minimize}}_{\bx\in\cX_1\times\cdots\times\cX_N}\,\,\mathop{\mathrm{maximize}}_{\by\in\cY_1\times\cdots\times\cY_N}\,\, \sum_{i=1}^{N-1}\Big(h_i(x_i) + \Psi_i(\bx,\by) - g_i(y_i)\Big)\\
	&&\mathrm{s.t.}\quad \sum_{i=1}^{N-1}A_ix_i + x_N = a,\quad \sum_{i=1}^{N-1}B_iy_i + y_N = b,\nonumber
\end{eqnarray}
where $x_N$, $y_N$ are slack variables. Let $B(z,r)$ denote a Euclidean ball centered at $z$ with radius $r$. Then we can specify $\cX_N: = \RR^n_+\cap B(0,r_\bx)$, $\cY_N:=\RR^m_+\cap B(0,r_\by)$.  Simple choices of $r_\bx$ is $r_\bx = \|a\| + \sum_{i=1}^{N-1}\|A_i\|\cdot\max_{x_i\in\cX_i}\|x_i\|$, which  bounds the magnitude of all feasible slack variable $x_N$. The radius $r_\by$ can be constructed similarly.\vspace{0.2cm}

\noindent{\bf (ii)} (Multi-player game). Instead of two players playing an $(N-1)$-stage game, we can consider the cases where two groups of players play a single-stage game. There are $N-1$ and $M-1$ players in the two groups respectively and the players in each group play collaboratively against the players in the other group. The strategy of these players are denoted by $x_i\in\cX_i, 1\leq i\leq N-1$ and $y_j\in\cY_j, 1\leq j\leq M-1$. Each player has its own interest represented by $h_i(x_i)$ or $g_j(y_j)$ as well as a coupled common interest $\Psi(\bx,\by)$. The two groups have their shared resource budget of $a$ and $b$ respectively. The resource cost of taking strategy $x_i$ or $y_j$ are $A_ix_i$ or $B_jy_j$ respectively, for $\forall i,j$. Therefore, the problem can be formulated in a similar form of \eqref{prob:game}, with $x_N$ and $y_M$ being the slack variables, and $\Phi(\bx,\by) = \sum_{i=1}^{N-1}h_i(x_i) + \Psi(\bx,\by) - \sum_{j=1}^{M-1}g_j(y_j).$\vspace{0.4cm}

\noindent\textbf{Related works.}$~$
In this work, the main problem \eqref{prob:main} is mostly related to the affinely constrained multi-block optimization problem and the convex-concave minimax saddle point problem, both of which have long history of research in convex optimization and variational inequalities. The proposed methods are closely related to the extragradient (EG) method and the Alternating Direction Method of Multipliers (ADMM).\vspace{0.2cm}

\noindent\textbf{Multi-block ADMM algorithms.} $~$ As a special case of \eqref{prob:main}, the affinely constrained multi-block convex optimization problem \vspace{-0.1cm}
\begin{equation}
	\label{prob:admm}
	\mathop{\mathrm{minimize}}_{x_i\in\cX_i, 1\leq i\leq N} \,\, \sum_{i=1}^Nh_i(x_i)+\Psi(\bx)\quad\mbox{s.t.}\quad A_1x_1 + A_2x_2 + \cdots + A_Nx_N = a,
\end{equation} 
has vast applications in statistics, signal processing, and distributed computing, see e.g.  \cite{ma2018efficient,lin2013design,chen2018convexified,boyd2011distributed}. Despite the popularity of  ADMM, its convergence is very subtle. When there are only two blocks of variables, i.e. $N = 2$, the convergence of ADMM to optimal solution is well established,  \cite{he20121,monteiro2013iteration,deng2016global,
	lin2015sublinear,hong2017linear}. When $N\geq 3$, counterexamples where  ADMM diverges are constructed \cite{chen2016direct}. Additional assumptions or algorithm modifications are needed to ensure the convergence. For example, with additional error bound \cite{hong2017linear} or the partial strong convexity \cite{chen2013convergence,lin2015sublinear,lin2015global,li2015convergent,cai2017convergence}, convergence to optimal solution can be guaranteed; by making strongly convex $\epsilon$-perturbations to the objective function \cite{lin2016iteration} or certain randomization over the update rule \cite{gao2019randomized}, the convergence of multi-block ADMM can still be achieved without additional conditions. 

\noindent\textbf{Convex-concave  minimax problems.}$~$ Saddle point problems are beyond the scope of the pure optimization. Such problems are crucial in many areas including game theory \cite{von2007theory,nisan2007algorithmic}, reinforcement learning  \cite{dai2018sbeed,chen2018scalable},  and image processing \cite{chambolle2011first},   to name a few. As a special case of the monotone variational inequality problems (VIP), most algorithms derived for monotone VIPs apply directly to the convex-concave saddle point problems. In terms of first-order algorithms, representative classical works include the extragradient (EG) method  \cite{korpelevichextrapolation,censor2011subgradient}, Mirror-Prox method \cite{nemirovski2004prox,juditsky2011solving}, and dual extrapolation methods \cite{nesterov2006solving,nesterov2007dual}, among others. For problems with bilinear coupling objective function,  optimal first-order algorithms have been derived in \cite{chambolle2011first,chambolle2016ergodic,chen2014optimal}, which matched the iteration complexity lower bounds provided by \cite{ouyang2019lower,zhang2019lower}. For general nonlinear coupling problems, near-optimal first-order algorithms \cite{lin2020near,wang2020improved} are also discovered.  More recent algorithmic development in first-order methods include \cite{abernethy2019last,liang2019interaction,mokhtari2020unified}. Recently,  people also start to consider the saddle point problems with multi-block structure. In \cite{xiao2019dscovr}, the authors proposed a stochastic variance reduced block coordinate method for finite-sum saddle point problem with bi-linear coupling. In \cite{jalilzadeh2019doubly}, a randomized block coordinate algorithm for saddle point problem with nonlinear coupling term is proposed. To our best knowledge,  the algorithmic development for affinely constrained multi-block saddle point problems remains unexplored.
\vspace{0.3cm}

\noindent\textbf{{Contributions.}}$~$
We summarize the contributions of the paper as follows.
\begin{itemize}
	\item We propose a concept of the \emph{$\epsilon$-saddle point} for the affinely constrained minimax problem and a handy sufficient condition to guarantee a point to be an $\epsilon$-saddle point.
	\item We consider a simple case of problem \eqref{prob:main} where only $\bx$ has $N=2$ blocks coupled through an affine constraint, while $\by$ is subject to no multi-block structure and affine constraint. Under different smoothness assumptions, we design two algorithms called SSG-ADMM and SEG-ADMM that achieve $\cO(1/\sqrt{T})$ and $\cO(1/T)$ convergence rates respectively. The analysis framework for SSG-ADMM and SEG-ADMM is very versatile. We show that it can easily incorporate classical multi-block ($N\geq3$) ADMM analysis, given additional conditions or proper algorithm modification.
	\item We consider the general case of \eqref{prob:main}, where both $\bx$ and $\by$ have multiple blocks coupled through affine constraints. An EGMM algorithm is proposed to solve the general problem \eqref{prob:main} with an $\cO(1/T)$ convergence rate. Unlike the ADMM-type algorithms SSG-ADMM and SEG-ADMM, EGMM is fully primal-dual and it abandons the augmented Lagrangian terms. EGMM not only keeps the benefit of SEG-ADMM and SEG-ADMM in solving small separable subproblems, but also guarantees the convergence regardless of the number of blocks.
	\item Under the special case of multi-block minimization problem, we make an extensive comparison between EGMM (primal-dual method) and ADMM (approximate dual ascent) to illustrate the benefits of the primal-dual methods. \vspace{0.0cm}
\end{itemize}

\noindent\textbf{Organization.}$~$ 
In Section \ref{section:epsolu}, we introduce the definition of an $\epsilon$-saddle point as well as a convenient sufficient condition. In Section \ref{section:1side-2block}, we propose the SSG-ADMM and SEG-ADMM algorithms and derive their convergence rate for solving a special case of problem \eqref{prob:main}. In Section \ref{section:two-sided}, we propose and analyze the EGMM algorithm in solving the general case problem \eqref{prob:main}. In Section \ref{section:discussion}, we make extensive comparison between EGMM and ADMM to illustrate the advantage of the primal-dual methods. Part of the discussion and proof has been moved to the Appendix. \vspace{0.3cm}

\noindent\textbf{Notations.}$~$
For the ease of notation, we will often write $\mathbf{x} = [x_1^\top,\cdots,x_N^\top]^\top$, and $\mathbf{x}_{i:j} = [x_{i}^\top,\cdots,x_{j}^\top]^\top$ for $i\leq j$. Similarly, we write $\mathbf{y} = [y_1^\top,\cdots,y_M^\top]^\top$, and $\mathbf{y}_{i:j} = [y_{i}^\top,\cdots,y_{j}^\top]^\top$ for $i\leq j$. We denote the dimension of $a$ as $n$ and the dimension of $b$ as $m$, i.e. $a\in\RR^n$ and $b\in\RR^m$. We also define $\cX = \cX_1\times\cdots\times\cX_N$ and $\cA(\mathbf{x}) := A_1x_1 + A_2x_2 + \cdots + A_Nx_N$. Similarly, we also define $\cY = \cY_1\times\cdots\times\cY_M$ and $\cB(\by) := B_1y_1 + B_2y_2 + \cdots + B_My_M$. In some situations, it will be more convenient to write $A = [A_1,\cdots,A_N]$, $B = [B_1,\cdots,B_N]$ and $\cA(\mathbf{x}) = A\bx$, $\cB(\by) = B\by$. We often switch between the two notations. We also write $h(\bx) = \sum_{i=1}^N h_i(x_i)$ and $g(\by) = \sum_{j = 1}^Mg_j(y_j)$. Therefore, we can also write  \eqref{prob:main} in a more compact form:
\begin{eqnarray}
	\label{prob:main-simple}
	&\min_{\mathbf{x}\in\cX}\max_{\by\in\cY}& \Phi(\bx,\by):= h(\bx) + \Psi(\bx,\by) - g(\by)\\
	&\mathrm{s.t.}& A\bx= a, B\by = b.\nonumber
\end{eqnarray} 
For the compact convex sets $\cX_i, i = 1,\cdots,N$ and $\cY_j, j = 1,\cdots,M$, we denote their diameters as $D_{\cX_i} = \max\{\|x_i-x'_i\|: x_i,x_i'\in\cX_i\}<+\infty$, $D_{\cY_j} = \max\{\|y_j-y_j'\|: y_j,y_j'\in\cY_j\}<+\infty$. We also denote $D_\cX = \sqrt{\sum_{i=1}^ND_{\cX_i}^2}$ and $D_\cY = \sqrt{\sum_{j=1}^MD_{\cY_j}^2}$.

\section{The $\epsilon$-saddle point condition} 
\label{section:epsolu}
As the first step of solving problem \eqref{prob:main}, let us study the definition of an $\epsilon$-saddle point. For convex-concave minimax saddle point problems, the classical concept of an $\epsilon$-saddle point is often defined as a feasible solution where the duality gap is bounded by $\epsilon$. That is, a point $(\bar\bx,\bar\by)$ such that $A\bar\bx = a,B\bar\by = b,\bar\bx\in\cX,  \bar\by\in\cY$ and 
\begin{equation*}
	\Delta(\bar{\bx},\bar{\by}):=\max_{\overset{\mathbf{y}\in\cY}{B\by = b}} \Phi(\bar{\mathbf{x}}, \by) \!-\!\! \min_{\overset{\mathbf{x}\in\cX}{A\bx= a}}\Phi(\mathbf{x},\bar \by) \leq \epsilon.
\end{equation*} 
By weak-duality theorem, $\Delta(\bar{\bx},\bar{\by})\geq0$ always holds as long as $(\bar\bx,\bar\by)$ is feasible. Such definition of $\epsilon$-saddle point is more suitable for measuring the convergence for algorithms that keep the iterates feasible throughout the iterations.  However, for problem \eqref{prob:main}, keeping the feasibility of the affine coupling constraints throughout the iterations can be prohibitively expensive.  $\Delta(\bar{\bx},\bar{\by})$ may even be negative when $\bar{\bx}$ and $\bar{\by}$ are infeasible, which makes the common definition of an $\epsilon$-saddle point meaningless. Therefore, we adopt the following definition of an $\epsilon$-saddle point that allows an $\epsilon$ constraint violation. 
\begin{definition}[$\epsilon$-saddle point]
	\label{definition:eps}
	We say $(\bar{\mathbf{x}}, \bar \by)\in\cX\times\cY$ is an $\epsilon$-saddle point of the multi-block affinely constrained minimax  problem \eqref{prob:main}, if
	\begin{eqnarray*}  
		\max_{\overset{\mathbf{y}\in\cY}{B\mathbf{y} = b}} \Phi(\bar{\mathbf{x}}, \by) \!-\!\! \min_{\overset{\mathbf{x}\in\cX}{A\mathbf{x} = a}}\Phi(\mathbf{x},\bar \by) \in [-\epsilon,\epsilon]\qquad\mbox{and}\qquad
		\|A\bar{\mathbf{x}} - a\| \leq \epsilon,\,\, \|B\bar{\mathbf{y}} - b\| \leq \epsilon.
	\end{eqnarray*}
\end{definition}
It is worth noting that Definition \ref{definition:eps} reduces to the commonly used $\epsilon$-optimal solution in the convex ADMM literature when problem \eqref{prob:main} takes the special form of \eqref{prob:admm}. For any $\bx\in\cX$ and $\by\in\cY$, let us define the functions $p_\by(\cdot)$ and $q_\bx(\cdot)$ as
\begin{eqnarray}
	\label{defn:qp}
	p_\by(v) := \min_{\overset{\mathbf{x}\in\cX}{A{\mathbf{x}} - a = v}}\Phi(\mathbf{x},\by) \qquad\mbox{and}\qquad q_\bx(u) := \max_{\overset{\mathbf{y}\in\cY}{B\mathbf{y} - b = u}} \Phi({\mathbf{x}}, \by).
\end{eqnarray} 
Then $p_\by(v)$ is a convex function in $v$, and $q_\bx(u)$ is a concave function in $u$, see \cite{bertsekas1997nonlinear}. To facilitate the convergence analysis, we require Slater's condition to hold.
\begin{assumption}[Slater's condition]
	\label{assumption:slater}
	There exists $(\hat \bx,\hat \by)\in\mathrm{int}(\cX)\times \mathrm{int}(\cY)$ s.t. $A\hat \bx = a$, $B\hat \by = b$, where $\mathrm{int}(\cdot)$ denotes the interior of a set. 
\end{assumption} 
As a result of Slater's condition, we have the following lemma. 
\begin{lemma}
	\label{assumption:bounded_rho}
	Let $\partial p_\by(\cdot)$ denote the subgradient of the convex function $p_\by(\cdot)$ and let $\partial q_\bx(\cdot)$ denote the supergradient of the concave function $q_\bx(\cdot)$. Suppose $\Phi(\cdot,\by)$ is convex and lower semi-continuous in $\cX$ for $\forall\by\in\cY$, $\Phi(\bx,\cdot)$ is concave and upper semi-continuous in $\cY$ for $\forall\bx\in\cX$, and $\Phi(\cdot,\cdot)$ is bounded over $\cX\times\cY$. If Assumption 2.2 holds true, then there exists a positive constant $\rho^*>0$ s.t.
	$$\sup_{\by\in\cY}\,\inf_{\omega_1\in\partial p_\by(0)}\|\omega_1\|\,\,\leq\,\, \rho^*\qquad\mbox{and}\qquad\,\, \sup_{\bx\in\cX}\,\inf_{\omega_2\in\partial q_\bx(0)}\|\omega_2\| \,\,\leq\,\, \rho^*.$$
\end{lemma}

The proof of this lemma is placed in Appendix \ref{appdx:Slater}. Next, we introduce a handy sufficient condition for claiming a point $(\bar{\bx},\bar{\by})$ to be an $\epsilon$-saddle point.
\begin{lemma}
	\label{lemma:opt-cond} 
	Let $\rho^*$ be defined by Lemma \ref{assumption:bounded_rho}. Then any $(\mathbf{\bar x}, \bar \by)\in\cX\times\cY$ satisfying
	\begin{eqnarray}
		\label{lm:opt-cond-1}
		\max_{\overset{\mathbf{y}\in\cY}{B\mathbf{y} = b}} \Phi(\bar{\mathbf{x}}, \by) \!-\!\! \min_{\overset{\mathbf{x}\in\cX}{A\mathbf{x} = a}}\Phi(\mathbf{x},\bar \by) +  \rho\|A\bar{\mathbf{x}} - a\| + \rho\|B\bar{\mathbf{y}} - b\|\leq \epsilon
	\end{eqnarray} 
	for some $\rho>\rho^*$ will be an $\cO(\epsilon)$-saddle point of \eqref{prob:main}.
\end{lemma}

\begin{proof}
	First, by adding and subtracting the term $\Phi(\bar{\mathbf{x}}, \bar \by)$, 
	\begin{eqnarray*}
		\max_{\overset{\mathbf{y}\in\cY}{B\mathbf{y} = b}} \Phi(\bar{\mathbf{x}}, \by) \!\!&-&\!\!\! \min_{\overset{{\mathbf{x}}\in\cX}{A{\mathbf{x}} = a}}\Phi(\mathbf{x},\bar \by)\\
		& = & \bigg(\max_{\overset{\mathbf{y}\in\cY}{B\mathbf{y} = b}} \Phi(\bar{\mathbf{x}}, \by)  - \Phi(\bar{\mathbf{x}}, \bar \by)\bigg) + \bigg( \Phi(\bar{\mathbf{x}}, \bar \by) - \min_{\overset{{\mathbf{x}}\in\cX}{A{\mathbf{x}} = a}}\Phi(\mathbf{x},\bar \by)\bigg).
	\end{eqnarray*}
	Consider the term $\Phi(\bar{\mathbf{x}}, \bar \by) - \min_{\mathbf{x}\in\cX,A{\mathbf{x}} = a}\Phi(\mathbf{x},\bar \by)$. Because $A\bar{\mathbf{x}} = a$ does not necessarily hold, the point $\bar{\mathbf{x}}$ may not be feasible. Therefore, we cannot simply argue that this term is non-negative. Let $p_{\bar{\by}}(\cdot)$ be defined by \eqref{defn:qp} with $\by=\bar{\by}$.  Then, by Lemma \ref{assumption:bounded_rho}, there exists $\omega_1\in\partial p_{\bar{\by}}(0)$ s.t. $\|\omega_1\|\leq\rho^*$.
	Take $\bar v = A\bar{\mathbf{x}} - a$, we have 
	\begin{eqnarray}
		\label{lm:opt-cond-3}
		\Phi(\bar{\mathbf{x}},\bar \by) - \min_{\overset{\mathbf{x}\in\cX}{A\mathbf{x} = a}}\Phi(\mathbf{x},\bar \by) &\geq& \min_{\overset{\mathbf{x}\in\cX}{A\mathbf{x} - a = \bar v}}\Phi(\mathbf{x}, \bar \by) - \min_{\overset{\mathbf{x}\in\cX}{A\mathbf{x} - a = 0}}\Phi(\mathbf{x},\bar \by)\nonumber\\
		& = & p_{\bar{\by}}(\bar v) - p_{\bar{\by}}(0)\\
		& \geq & \langle\omega_1,\bar v-0\rangle\nonumber\\
		& \geq & -\|\omega_1\|\cdot\|A\bar{\mathbf{x}} - a\|.\nonumber
	\end{eqnarray}
	Similarly, there exists $\omega_2\in\partial q_{\bar{\bx}}(0)$ satisfying $\|\omega_2\|\leq\rho^*$ such that  
	\begin{eqnarray}
		\label{lm:opt-cond-2}
		\max_{\overset{\mathbf{y}\in\cY}{B\mathbf{y} = b}} \Phi(\bar{\mathbf{x}}, \by)  - \Phi(\bar{\mathbf{x}}, \bar \by) 
		& \geq & -\|\omega_2\|\cdot\|B\bar{\mathbf{y}} - b\|.
	\end{eqnarray} 
	Combining \eqref{lm:opt-cond-1}, \eqref{lm:opt-cond-3} and \eqref{lm:opt-cond-2}, we get 
	\begin{eqnarray}
		\epsilon & \geq & (\rho-\|\omega_1\|)\|A\bar{\mathbf{x}} - a\| +  (\rho-\|\omega_2\|)\|B\bar{\mathbf{y}} - b\|\nonumber\\
		& & +  \underbrace{\bigg(\max_{\overset{\mathbf{y}\in\cY}{B\mathbf{y} = b}} \Phi(\bar{\mathbf{x}}, \by) \!-\! \min_{\overset{\mathbf{x}\in\cX}{A\mathbf{x} = b}}\!\Phi(\mathbf{x},\bar z)  + \|\omega_1\|\!\cdot\!\|A\bar{\mathbf{x}} - a\| + \|\omega_2\|\!\cdot\!\|B\bar{\mathbf{y}} - b\|\bigg)}_{\geq0}\nonumber.
	\end{eqnarray} 
	Because $\rho>\rho^*$ and $\max\left\{\|\omega_1\|,\|\omega_2\|\right\}\leq \rho^*$, we have $\rho-\|\omega_1\|>0$ and $\rho-\|\omega_2\|>0$, which further implies that $(\bar{\mathbf{x}},\bar \by)$ is an $\cO(\epsilon)$-saddle point described by Definition \ref{definition:eps}.
\end{proof}

\section{One-sided affinely constrained problems}
\label{section:1side-2block}
$~$Before solving the general form problem \eqref{prob:main}, we  consider a slightly simpler setting:
\begin{equation} 
	\label{prob:1-sided}
	\min_{\bx\in\cX} \max_{\by\in\cY} \,\,\,\Phi(\bx, \by) := \sum_{i=1}^Nh_i(x_i,\by) + \Psi(\bx,\by)\qquad\mbox{s.t.}\qquad A\bx = a,	
\end{equation} 
where only $\bx$ is subject to the multi-block structure and the affine coupling constraint. In contrast to the main problem \eqref{prob:main}, we also make \eqref{prob:1-sided} a bit more general by allowing $h_i(x_i,\by)$ to depend on $\by$. 

For problems with this structure, we develop the SSG-ADMM and SEG-ADMM algorithms which naturally extend the well-studied ADMM algorithm to the minimax setting. We will show that the analysis of the proposed methods is very versatile and can easily incorporate the existing results of ADMM research. On the other hand, these methods also suffer from the fundamental restrictions of all ADMM-type algorithms. That is, the methods in general diverge when $N\geq3$. Moreover, such ADMM-type algorithm is extremely hard to analyze for the general problem \eqref{prob:main} which will be solved with a new approach later. 

In this section, we will only discuss the SSG-ADMM and SEG-ADMM algorithms for solving problem \eqref{prob:1-sided} with $N=2$, while leaving the discussion for $N\geq3$ to the Appendix. 
The following assumptions are made for the objective function
under different scenarios. 

\begin{assumption}
	\label{assumption:Convexity-1sided}
	For $\forall\mathbf{x}\!\in\!\cX$, $\Phi(\mathbf{x},\cdot)$ is concave and upper semi-continuous in $\cY$. For $\forall \by\!\in\!\cY$, $h_i(\cdot,\by)$ is convex and lower semi-continuous in $\cX_i$, for $i = 1,...,N$. The overall function $\Phi(\cdot,\cdot)$ takes bounded value over $\cX\times\cY$. For $\forall\by\!\in\!\cY$, $\Psi(\cdot,\by)$ is smooth and convex in $\cX$, and $\nabla_\bx \Psi(\cdot,\by)$ is $L_\bx$-Lipschitz continuous:
	$$\|\nabla_\bx \Psi(\bx,\by) - \nabla_\bx\Psi(\bx',\by)\|\leq L_\bx\|\bx-\bx'\|,\quad\forall\bx,\bx'\in\cX, \forall\by\in\cY.$$
\end{assumption}
\noindent If $\Phi(\mathbf{x},\cdot)$ is nonsmooth, the following assumption is made. 
\begin{assumption}
	\label{assumption:bounded-supgrad-1sided}
	The supergradient of $\Phi(\mathbf{x},\cdot)$ w.r.t. $\by$ is upper bounded by some constant $0\leq\ell<+\infty$. Namely,  $\sup\big\{\|u\|: u\in\partial_\by \Phi(\mathbf{x},\by), \mathbf{x}\in\cX, \by\in\cY\big\} \leq  \ell$.
\end{assumption}

If $\Phi(\mathbf{x},\cdot)$ is smooth, we make the following assumption. 

\begin{assumption}
	\label{assumption:Lipschitz-1sided}
	The partial gradient  $\nabla_{\!\by} \Phi(\cdot,\cdot)$ is $L_\by$-Lipschitz continuous: 
	$$\|\nabla_{\!\by} \Phi(\bx,\by) - \nabla_{\!\by} \Phi(\bx',\by')\|\leq L_\by\sqrt{\|\bx-\bx'\|^2+\|\by-\by'\|^2},\quad \forall \bx,\bx'\in\cX, \forall \by,\by'\in\cY.$$
\end{assumption}

Define the \emph{linearized} augmented Lagrangian function as 
\begin{equation}
	\label{defn:AugLag}
	\cL_\gamma(\mathbf{x},\lambda;\tilde{\bx},\tilde\by) = \sum_{i=1}^Nh_i(x_i,\tilde\by) + \langle\nabla_\bx\Psi(\tilde\bx,\tilde{\by}),\bx-\tilde{\bx}\rangle - \langle\lambda,A\mathbf{x}-a\rangle + \frac{\gamma}{2}\|A\mathbf{x}-a\|^2.
\end{equation}
Based on this notation, we introduce the \texttt{Prox-ADMM} module described by Algorithm \ref{alg:ADMM-module}, which is a common ingredient of both SSG-ADMM and SEG-ADMM.\\  
\begin{algorithm2e}[H]
	\DontPrintSemicolon  
	\caption{Proximal ADMM step $(\mathbf{x}^+\!, \lambda^+) \!\!=\!\! \texttt{Prox-ADMM}(\mathbf{x},\!\lambda;\by;\!\gamma,\!\{\!H_i\!\}_{i\!=\!1}^{\!N})$}
	\label{alg:ADMM-module}
	\vspace{0.1cm}\textbf{input}: ${\mathbf{x}}\in\cX$, $ \by\in\cY$, $\lambda\in\R^n$, $\gamma>0$, and matrices $H_i\succ 0$, $i=1,2,...,N.$\\
	\vspace{0.1cm}Update the decision variable $\mathbf{x}^+$:\vspace{-0.1cm}
	$$\begin{cases}
		x_1^+ = \argmin_{w_1\in\cX_1} \cL_{\gamma}\left(w_1,{\mathbf{x}}_{2:N}, \lambda; \bx,\by\right) + \frac{1}{2}\|w_1-x_1\|^2_{H_1}\\
		x_2^+ = \argmin_{w_1\in\cX_2} \cL_{\gamma}\left(x_1^+, w_2,{\mathbf{x}}_{3:N}, \lambda; \bx,\by\right) + \frac{1}{2}\|w_2 - x_2\|^2_{H_2}\\
		\qquad\qquad\qquad\qquad\qquad\quad\vdots\\
		x_i^+ = \argmin_{w_i\in\cX_i} \cL_{\gamma}\left(\mathbf{x}_{1:i-1}^+, w_i,{\mathbf{x}}_{i+1:N}, \lambda; \bx,\by\right) + \frac{1}{2}\|w_i - x_i\|^2_{H_i}\\
		\qquad\qquad\qquad\qquad\qquad\quad\vdots\\
		x_N^+ = \argmin_{w_N\in\cX_N} \cL_{\gamma}\left(\mathbf{x}_{1:N-1}^+, w_N, \lambda; \bx,\by\right) + \frac{1}{2}\|w_N - x_N\|^2_{H_N}
	\end{cases}$$\\
	Update the Lagrangian multipliers: $\lambda^+ = \lambda-\gamma\cdot (A\mathbf{x}^+-a)$\\
	\textbf{output}: $(\mathbf{x}^+,\lambda^+)$.
\end{algorithm2e} 
\noindent We assume the subproblems in Algorithm \ref{alg:ADMM-module} can be solved efficiently. In particular, if we set the positive definite matrices as $H_i = \sigma I  - \gamma A_i^\top A_i$, the subproblem becomes 
$$\min_{w_i\in\cX_i} h_i(w_i,\tilde\by) + \frac{\sigma}{2}\bigg\|w_i - \Big[x_i - \frac{1}{\sigma}\Big(\nabla_{\!\bx_i}\Psi(\bx,\by)-A_i^\top\lambda + \gamma\Big(\sum_{j<i}A_jx_j^+ + \sum_{j\geq i}A_jx_j-a\Big)\Big)\Big]\bigg\|^2,$$	
which can be viewed as linearizing the augmented quadratic penalty term. Next, we characterize the iteration of the \texttt{Prox-ADMM} module by Lemma \ref{lemma:2-ADMM-step}.
\begin{lemma}
	\label{lemma:2-ADMM-step}
	For problem \eqref{prob:1-sided}, suppose Assumption \ref{assumption:Convexity-1sided} holds and $N = 2$. Let $(\mathbf{x}^{k+1},\lambda^{k+1}) = \emph{\texttt{Prox-ADMM}}(\mathbf{x}^k\!,\lambda^k;\tilde \by;\gamma,\{H_i\}_{i=1}^N)$ for some $\tilde \by\!\in\!\cY\!$. Define the  block diagonal matrix $ H \!:=\! \mathrm{Diag}(H_1,...,H_N)$. 	Then, for $\forall\lambda$ and $\forall\mathbf{x}\!\in\!\cX\!$ s.t. $A\mathbf{x} \!=\! a$, we have
	\begin{align} 
		\label{lm:2-ADMM-step}
		\Phi(\bx^{k+1},\tilde \by) -& \Phi(\bx,\tilde \by) -   \langle\lambda,A\bx^{k+1}-a\rangle \leq  \frac{1}{2\gamma}\left(\big\|\lambda-\lambda^k\big\|^2-\big\|\lambda-\lambda^{k+1}\big\|^2\right)\nonumber\\
		\leq \,\,&
		\frac{\gamma}{2}\left(\big\|A_1x_1+A_2x_2^k-a\big\|^2-\big\|A_1x_1+A_2x_2^{k+1}-a\big\|^2\right)  \nonumber \\
		& + \frac{1}{2}\left(\|\bx-\bx^k\|^2_H - \|\bx-\bx^{k+1}\|^2_H\right) - \frac{1}{2}\|\bx^k-\bx^{k+1}\|^2_{H-L_\bx I}.
	\end{align}
\end{lemma}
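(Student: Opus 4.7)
The plan is to derive the inequality directly from the first-order optimality conditions of the two Gauss--Seidel subproblems, and then carefully bundle the cross terms that arise from the asymmetry of the update (where $x_1^{k+1}$ is computed with the old $x_2^k$ but $x_2^{k+1}$ is computed with the new $x_1^{k+1}$).

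\textbf{Step 1 (optimality and convexity of $h_i$).} For each $i\in\{1,2\}$, I would write the variational inequality satisfied by $x_i^{k+1}$, then use convexity of $h_i(\cdot,\tilde\by)$ to absorb the subgradient into a function-value difference. Concretely, for any feasible $x_i\in\cX_i$,
\begin{eqnarray*}
h_1(x_1,\tilde\by)-h_1(x_1^{k+1},\tilde\by) &+& \langle \nabla_{x_1}\Psi(\bx^k,\tilde\by)-A_1^{\!\top}\!\lambda^k+\gamma A_1^{\!\top}(A_1x_1^{k+1}\!+\!A_2x_2^k\!-\!a) + \sigma(x_1^{k+1}\!-\!x_1^k),\, x_1-x_1^{k+1}\rangle \geq 0,\\
h_2(x_2,\tilde\by)-h_2(x_2^{k+1},\tilde\by) &+& \langle \nabla_{x_2}\Psi(\bx^k,\tilde\by)-A_2^{\!\top}\!\lambda^k+\gamma A_2^{\!\top}(A\bx^{k+1}\!-\!a) + \sigma(x_2^{k+1}\!-\!x_2^k),\, x_2-x_2^{k+1}\rangle \geq 0.
\end{eqnarray*}
I sum these two inequalities. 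The $h_i$ terms combine into $h(\bx,\tilde\by)-h(\bx^{k+1},\tilde\by)$, and the $-A_i^{\!\top}\lambda^k$ terms combine (using $A\bx=a$) into $\langle\lambda^k,A\bx^{k+1}-a\rangle$.

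\textbf{Step 2 (smoothness of $\Psi$).} To transform the linearization $\langle\nabla_\bx\Psi(\bx^k,\tilde\by),\bx-\bx^{k+1}\rangle$ into an honest $\Phi$-value difference, I would combine the descent lemma (from $L_\bx$-Lipschitzness of $\nabla_\bx\Psi$) with convexity of $\Psi(\cdot,\tilde\by)$ to obtain
\begin{equation*}
\Psi(\bx^{k+1},\tilde\by)-\Psi(\bx,\tilde\by) \leq \langle \nabla_\bx\Psi(\bx^k,\tilde\by),\, \bx^{k+1}-\bx\rangle + \tfrac{L_\bx}{2}\|\bx^{k+1}-\bx^k\|^2.
\end{equation*}
Adding this to the summed inequality converts the linearization into $\Phi(\bx^{k+1},\tilde\by)-\Phi(\bx,\tilde\by)$, producing the claimed $-\tfrac{\sigma-L_\bx}{2}\|\bx^k-\bx^{k+1}\|^2$ term once the $\sigma$ three-point identity is applied in Step~4.

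\textbf{Step 3 (the key step: handling the $\gamma$ cross terms).} This is where the Gauss--Seidel asymmetry bites. Introducing the shorthands $u=A_1x_1^{k+1}+A_2x_2^k-a$, $v=A\bx^{k+1}-a$, $p=A_1x_1+A_2x_2^k-a$, $q=A_1x_1+A_2x_2^{k+1}-a$, one checks $p-u = q-v = A_1(x_1-x_1^{k+1})$. Then the polarization identity $\langle w, r-w\rangle = \tfrac12(\|r\|^2-\|w\|^2-\|r-w\|^2)$ gives
\begin{equation*}
\gamma\langle u,\, p-u\rangle + \gamma\langle v,\, -q\rangle = \tfrac{\gamma}{2}(\|p\|^2-\|q\|^2) - \tfrac{\gamma}{2}\|u\|^2 - \tfrac{\gamma}{2}\|v\|^2,
\end{equation*}
where the two copies of $\tfrac{\gamma}{2}\|p-u\|^2=\tfrac{\gamma}{2}\|q-v\|^2$ cancel exactly. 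The nonpositive tail $-\tfrac{\gamma}{2}\|u\|^2$ is dropped, and $\gamma v = \lambda^k-\lambda^{k+1}$ (from the dual update) converts $-\tfrac{\gamma}{2}\|v\|^2$ into $-\tfrac{1}{2\gamma}\|\lambda^k-\lambda^{k+1}\|^2$. This exact cancellation of the $\|A_1(x_1-x_1^{k+1})\|^2$ cross terms is the crux of the argument and, in my view, the only nonroutine part of the proof.

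\textbf{Step 4 (dual telescope and prox telescope).} Subtracting $\langle\lambda,A\bx^{k+1}-a\rangle$ from both sides leaves $\langle\lambda^k-\lambda, A\bx^{k+1}-a\rangle = \tfrac{1}{\gamma}\langle\lambda^k-\lambda,\lambda^k-\lambda^{k+1}\rangle$, which the identity $2\langle r,s\rangle=\|r\|^2+\|s\|^2-\|r-s\|^2$ splits into $\tfrac{1}{2\gamma}(\|\lambda-\lambda^k\|^2-\|\lambda-\lambda^{k+1}\|^2) + \tfrac{1}{2\gamma}\|\lambda^k-\lambda^{k+1}\|^2$; the last piece cancels the $-\tfrac{1}{2\gamma}\|\lambda^k-\lambda^{k+1}\|^2$ from Step~3. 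For the $\sigma$-proximal inner product, the standard three-point identity $\langle \bx^{k+1}-\bx^k,\bx-\bx^{k+1}\rangle=\tfrac12(\|\bx-\bx^k\|^2-\|\bx-\bx^{k+1}\|^2-\|\bx^{k+1}-\bx^k\|^2)$ produces the remaining $\sigma$-telescope and the $-\tfrac{\sigma-L_\bx}{2}\|\bx^k-\bx^{k+1}\|^2$ term after absorbing the $\tfrac{L_\bx}{2}\|\bx^{k+1}-\bx^k\|^2$ from Step~2. Collecting everything yields exactly \eqref{lm:2-ADMM-step}.
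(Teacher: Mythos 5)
Your proof is correct and takes essentially the same route as the paper's: both derive the variational inequalities of the two Gauss--Seidel subproblems, invoke convexity of $h_i$ and the descent lemma plus convexity for $\Psi$, and then close the argument with polarization/three-point identities, dropping the same nonpositive term $-\tfrac{\gamma}{2}\|A_1x_1^{k+1}+A_2x_2^k-a\|^2$ and cancelling $\tfrac{\gamma}{2}\|A\bx^{k+1}-a\|^2$ against the dual-update term. The only (cosmetic) difference is that the paper first substitutes $\lambda^{k+1}$ into the block-1 condition to isolate the cross term $\gamma(x_1-x_1^{k+1})^\top A_1^\top A_2(x_2^k-x_2^{k+1})$ and expands it with a four-point identity, whereas you keep the raw terms $u,v$ and polarize directly using $p-u=q-v$; the two bookkeepings are algebraically identical.
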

The analysis of this lemma mainly combines the techniques from \cite{lin2015sublinear} and \cite{gao2019randomized}. The proof is presented in Appendix \ref{appdx:lemma:2-admm-step}.

\subsection{The  SSG-ADMM for nonsmooth problem}
In this section, we consider the setting where Assumptions \ref{assumption:Convexity-1sided} and \ref{assumption:bounded-supgrad-1sided} hold. Namely, $\Phi(\bx,\cdot)$ can be a nonsmooth function of $\by$. Under these conditions, we propose the following \underbar{S}addle-point \underbar{S}uper\underbar{G}radient \underbar{A}lternating \underbar{D}irection \underbar{M}ethod of \underbar{M}ultipliers (SSG-ADMM), see Algorithm \ref{alg:SSG-ADMM}, which is able to obtain an $\cO(1/\sqrt{T})$ rate of convergence.

\begin{algorithm2e}
	\DontPrintSemicolon  
	\caption{The  SSG-ADMM Algorithm}
	\label{alg:SSG-ADMM}
	\textbf{input}: $\mathbf{x}^0\in\cX, \by^0\in\cY$, $\lambda^0=0\in\RR^n$, $\gamma>0$. Matrices $G\succ0$ and $H_i\succ 0$.  \\
	\For{$k = 0,...,T$}{
		Apply the \texttt{Prox-ADMM} submodule:
		$$(\mathbf{x}^{k+1},\lambda^{k+1}) = \texttt{Prox-ADMM}(\mathbf{x}^k,\lambda^k;\by^k;\gamma, \{H_i\}_{i=1}^N).$$\\
		Compute $u^k\in\partial_\by \Phi(\bx^{k+1},\by^{k})$ and apply supergradient ascent step:\vspace{-0.1cm} 
		$$\by^{k+1} = \argmin_{\by\in\cY} \,\,\frac{1}{2}\left\|\by-[\by^k + G^{-1} u^k]\right\|^2_G.$$\vspace{-0.2cm}}
	\textbf{output:} $\bar \bx = \frac{1}{T}\sum_{k = 1}^T\mathbf{x}^k$, $\bar \by = \frac{1}{T}\sum_{k = 0}^{T-1}\by^k$.
\end{algorithm2e} 
In the  SSG-ADMM algorithm, we apply one \texttt{Prox-ADMM} step to update $\bx$ and one supergradient  step to update $\by$. While $\bx$ update is directly characterized by Lemma \ref{lemma:2-ADMM-step}, we analyze $\by$ update in the following lemma, see proof in Appendix \ref{appdx:lemma:SG-step}.

\begin{lemma}
	\label{lemma:SG-step}
	Let $\by^k$ and $\by^{k+1}$ be generated by Algorithm \ref{alg:SSG-ADMM}. For $\forall\by\in\cY$, we have 
	\begin{eqnarray}
		\label{lm:SG-step}
		\quad\quad\Phi(\bx^{k+1},\by) - \Phi(\bx^{k+1},\by^k) \leq \frac{1}{2}\Big(\|\by-\by^{k}\|^2_G - \|\by-\by^{k+1}\|^2_G\Big) + \frac{1}{2}\|u^k\|^2_{G^{-1}}.\!\!
	\end{eqnarray} 
\end{lemma}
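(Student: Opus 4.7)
The plan is to chain together three standard ingredients: concavity of $\Phi(\bx^{k+1},\cdot)$ in $\by$, the first-order optimality condition for the projection that defines $\by^{k+1}$, and Young's inequality.

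First, since $\Phi(\bx^{k+1},\cdot)$ is concave on $\cY$ by Assumption \ref{assumption:Convexity-1sided} and $u^k\in\partial_\by \Phi(\bx^{k+1},\by^k)$ is a supergradient, I would start by writing
\begin{equation*}
\Phi(\bx^{k+1},\by) - \Phi(\bx^{k+1},\by^k) \leq \langle u^k,\by-\by^k\rangle,\qquad\forall\by\in\cY.
\end{equation*}
This reduces the lemma to an inequality that no longer involves $\Phi$ directly.

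Next, the $\by$-update is a projection: $\by^{k+1}=\mathrm{proj}_{\cY}\!\bigl(\by^k+\sigma_\by^{-1}u^k\bigr)$. Its first-order optimality condition reads
\begin{equation*}
\langle \by^{k+1} - \by^k - \sigma_\by^{-1}u^k,\,\by-\by^{k+1}\rangle \geq 0,\qquad\forall\by\in\cY,
\end{equation*}
which rearranges to $\langle u^k,\by-\by^{k+1}\rangle \leq \sigma_\by\langle \by^{k+1}-\by^k,\by-\by^{k+1}\rangle$. Applying the three-point identity $2\langle a-b,c-a\rangle = \|c-b\|^2-\|c-a\|^2-\|a-b\|^2$ with $a=\by^{k+1}$, $b=\by^k$, $c=\by$ turns the right-hand side into
\begin{equation*}
\frac{\sigma_\by}{2}\Bigl(\|\by-\by^k\|^2-\|\by-\by^{k+1}\|^2-\|\by^{k+1}-\by^k\|^2\Bigr).
\end{equation*}

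Finally, to convert $\langle u^k,\by-\by^{k+1}\rangle$ into $\langle u^k,\by-\by^k\rangle$, I would split $\langle u^k,\by-\by^k\rangle = \langle u^k,\by-\by^{k+1}\rangle + \langle u^k,\by^{k+1}-\by^k\rangle$ and bound the trailing inner product by Young's inequality:
\begin{equation*}
\langle u^k,\by^{k+1}-\by^k\rangle \leq \frac{1}{2\sigma_\by}\|u^k\|^2 + \frac{\sigma_\by}{2}\|\by^{k+1}-\by^k\|^2.
\end{equation*}
The $\frac{\sigma_\by}{2}\|\by^{k+1}-\by^k\|^2$ term cancels exactly against the negative quadratic produced by the three-point identity, leaving precisely the bound claimed in \eqref{lm:SG-step}. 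No step is really an obstacle here; the only thing to be careful about is the sign/constant bookkeeping in the three-point identity so that the $\|\by^{k+1}-\by^k\|^2$ cancellation is clean, which is the reason the stated bound uses the coefficient $\tfrac{1}{2\sigma_\by}$ on $\|u^k\|^2$ rather than something larger.
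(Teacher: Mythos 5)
Your proposal is correct and follows essentially the same route as the paper's proof in Appendix \ref{appdx:lemma:SG-step}: supergradient inequality from concavity, the first-order optimality condition of the projection step combined with the three-point identity, and Young's inequality on $\langle u^k,\by^{k+1}-\by^k\rangle$ so that the $\tfrac{\sigma_\by}{2}\|\by^{k+1}-\by^k\|^2$ terms cancel. No gaps.
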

As a result, we have the following convergence rate result. 
\begin{theorem}[Convergence of SSG-ADMM]
	\label{theorem:complexity-ssg}
	Consider problem \eqref{prob:1-sided} with $N=2$. Suppose Assumptions \ref{assumption:Convexity-1sided} and \ref{assumption:bounded-supgrad-1sided} hold and $(\bar{\bx},\bar \by)$ is returned by Algorithm \ref{alg:SSG-ADMM} after $T$ iterations.  
	If we choose $H_i\succeq L_\bx\cdot I, \forall i$ and $G\succeq \frac{\sqrt{T}\ell}{D_\cY}\cdot I$, it holds for $\forall\rho>0$ that
	\begin{eqnarray}
		\label{thm:complexity-SSG}
		\max_{\by\in \cY} \Phi(\bar{\bx},\by) - \!\!\!\! \min_{\bx\in\cX,A\bx= a}\!\!\Phi(\bx,\bar \by)  +  \rho\|A\bar{\bx}\!-\! a\|\leq \frac{\rho^2/\gamma+ \|H\| D_{\cX}^2 + \gamma\|A_2\|^2 D_{\cX_2}^2}{2T} + \frac{\ell D_\cY}{\sqrt{T}}.\nonumber
	\end{eqnarray}
	In particular, if we set $H_i = L_\bx\cdot I$ for $\forall i$ and $G = \frac{\sqrt{T}\ell}{D_\cY}\cdot I$, by Lemma \ref{lemma:opt-cond}, it takes $T=\cO(\epsilon^{-2})$ iterations to reach an $\epsilon$-saddle point.
\end{theorem}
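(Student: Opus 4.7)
The plan is to combine Lemma \ref{lemma:2-ADMM-step} and Lemma \ref{lemma:SG-step} into a single per-iteration inequality that bounds a ``gap plus constraint-violation'' quantity, then telescope the sum, apply Jensen's inequality, and finally invoke Lemma \ref{lemma:opt-cond}.

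First I would apply Lemma \ref{lemma:2-ADMM-step} with $\tilde{\by}=\by^k$ and $\sigma=\sigma_\bx=L_\bx$ (so the last non-telescoping term $-\tfrac{\sigma-L_\bx}{2}\|\bx^k-\bx^{k+1}\|^2$ vanishes), obtaining for any $\bx\in\cX$ with $A\bx=a$ and any $\lambda$:
\begin{equation*}
\Phi(\bx^{k+1},\by^k)-\Phi(\bx,\by^k)-\langle\lambda,A\bx^{k+1}-a\rangle \le \Delta^\bx_k(\bx,\lambda),
\end{equation*}
where $\Delta^\bx_k(\bx,\lambda)$ denotes the telescoping right-hand side of \eqref{lm:2-ADMM-step}. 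Then I would add Lemma \ref{lemma:SG-step}, which tells us $\Phi(\bx^{k+1},\by)-\Phi(\bx^{k+1},\by^k)\le \Delta^\by_k(\by)+\tfrac{1}{2\sigma_\by}\|u^k\|^2$ for any $\by\in\cY$, where $\Delta^\by_k(\by)$ is the telescoping right-hand side of \eqref{lm:SG-step}. The sum gives the key per-iteration bound
\begin{equation*}
\Phi(\bx^{k+1},\by)-\Phi(\bx,\by^k)-\langle\lambda,A\bx^{k+1}-a\rangle \le \Delta^\bx_k(\bx,\lambda)+\Delta^\by_k(\by)+\tfrac{1}{2\sigma_\by}\|u^k\|^2.
\end{equation*}

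Next I would sum this inequality for $k=0,\dots,T-1$. The $\lambda$-terms telescope to $\tfrac{1}{2\gamma}(\|\lambda-\lambda^0\|^2-\|\lambda-\lambda^T\|^2)\le\tfrac{\|\lambda\|^2}{2\gamma}$ using $\lambda^0=0$; the $\bx$-terms telescope to $\tfrac{\sigma_\bx}{2}\|\bx-\bx^0\|^2+\tfrac{\gamma}{2}\|A_1x_1+A_2x_2^0-a\|^2$, bounded by $\tfrac{L_\bx D_\cX^2}{2}+\tfrac{\gamma\|A_2\|^2 D_{\cX_2}^2}{2}$ (choosing $x_1$ so that $A_1x_1=a-A_2x_2^*$ for feasibility); the $\by$-terms telescope to at most $\tfrac{\sigma_\by D_\cY^2}{2}$; and the supergradient term is bounded by $\tfrac{T\ell^2}{2\sigma_\by}$ using Assumption \ref{assumption:bounded-supgrad-1sided}. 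Dividing by $T$ and applying the convexity of $\Phi(\cdot,\by^k)$, concavity of $\Phi(\bx^{k+1},\cdot)$, and linearity of the constraint term in Jensen's inequality, using the averages $\bar\bx=\tfrac{1}{T}\sum_{k=1}^T\bx^k$ and $\bar\by=\tfrac{1}{T}\sum_{k=0}^{T-1}\by^k$, yields
\begin{equation*}
\Phi(\bar\bx,\by)-\Phi(\bx,\bar\by)-\langle\lambda,A\bar\bx-a\rangle\le \frac{\|\lambda\|^2/\gamma+L_\bx D_\cX^2+\gamma\|A_2\|^2 D_{\cX_2}^2}{2T}+\frac{\ell^2}{2\sigma_\by}+\frac{\sigma_\by D_\cY^2}{2T}.
\end{equation*}

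Finally, I would take the supremum of the left-hand side over $\by\in\cY$, the infimum over $\bx\in\cX$ with $A\bx=a$, and the supremum over $\|\lambda\|\le\rho$ (the last yielding precisely $\rho\|A\bar\bx-a\|$ with $\|\lambda\|^2\le\rho^2$). Plugging in $\sigma_\by=\sqrt{T}\ell/D_\cY$ balances the two $\by$-related terms into $D_\cY\ell/\sqrt{T}$, producing exactly the claimed bound. The $\cO(\epsilon^{-2})$ iteration complexity then follows by setting the RHS equal to $\epsilon$, observing that the $1/\sqrt{T}$ term is dominant, and invoking Lemma \ref{lemma:opt-cond}. The main delicate point is the correct index handling for $\bar\bx$ and $\bar\by$ and the choice of feasible $\bx$ used to bound the initial constraint residual $\|A_1 x_1+A_2 x_2^0-a\|$ by $\|A_2\|D_{\cX_2}$; everything else is mechanical telescoping and Jensen.
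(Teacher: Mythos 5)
Your proposal is correct and follows essentially the same route as the paper's proof: combine Lemma \ref{lemma:2-ADMM-step} (with $\tilde\by=\by^k$, $\sigma=\sigma_\bx$) with Lemma \ref{lemma:SG-step}, average and telescope, apply Jensen, and then optimize over the comparators $\bx$, $\by$, $\lambda$ — your supremum over $\|\lambda\|\le\rho$ is just the paper's explicit choice $\lambda=-\rho(A\bar\bx-a)/\|A\bar\bx-a\|$. The bound $\|A_1x_1+A_2x_2^0-a\|\le\|A_2\|D_{\cX_2}$ comes from the feasibility $A_1x_1+A_2x_2=a$ of the comparison point (not from a separate choice of $x_1$), exactly as in the paper.
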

\begin{proof}
	First, let us sum up \eqref{lm:2-ADMM-step} (with $\tilde{\by} = \by^k$ and $\{H_i\}_{i=1}^N$) and \eqref{lm:SG-step}:
	\begin{eqnarray}
		& & \Phi(\bx^{k+1},\by) - \Phi(\bx,\by^{k}) - \langle\lambda, A\bx^{k+1} - a\rangle\nonumber\\
		&\leq& \frac{1}{2\gamma}\!\left(\big\|\lambda\!-\!\lambda^k\big\|^2\!-\!\big\|\lambda-\lambda^{k+1}\big\|^2\right) \!+\! \frac{\gamma}{2}\!\left(\big\|A_1x_1\!+\!A_2x_2^k\!-\!a\big\|^2\!-\!\big\|A_1x_1\!+\!A_2x_2^{k+1}\!-\!a\big\|^2\right)\nonumber\\
		& & + \frac{1}{2}\left(\|\bx-\bx^k\|^2_H - \|\bx-\bx^{k+1}\|^2_H\right)\!+\! \frac{1}{2}\!\left(\|\by-\by^k\|^2_G - \|\by - \by^{k+1}\|^2_G\right) \!+\! \frac{1}{2}\big\|u^k\big\|^2_{G^{-1}}.\nonumber
	\end{eqnarray}
	Averaging the above inequality for $k = 0,...,T-1$, then Jensen's inequality implies
	\begin{eqnarray}
		& & \Phi(\bar{\mathbf{x}},\by) - \Phi(\bx,\bar \by) - \langle\lambda, A\bar{\bx} - a\rangle\nonumber\\
		&\leq&\frac{1}{T}\sum_{k = 0}^{T-1}\Big(\Phi(\bx^{k+1},\by) - \Phi(\bx,\by^{k}) - \langle\lambda, A\bx^{k+1} - a\rangle\Big)\nonumber\\
		&\leq& \frac{\|\lambda-\lambda^0\|^2}{2\gamma\cdot T} + \frac{\gamma\|A_1x_1+A_2x_2^0-a\|^2+\|\bx-\bx^0\|^2_H+\|\by-\by^0\|^2_G}{2T} + \frac{\sum_{k = 0}^{T-1}\|u^k\|^2_{G^{-1}}}{2 T}\nonumber\\
		&  \leq & \frac{\|\lambda-\lambda^0\|^2}{2\gamma\cdot T} + \frac{\gamma\|A_1x_1+A_2x_2^0-a\|^2}{2T} + \frac{\|H\| D_\cX^2  + \|G\| D_\cY^2}{2T} + \frac{\sum_{k = 0}^{T-1}\|u^k\|^2}{2\|G\|\cdot T}.\nonumber
	\end{eqnarray}
	By setting 
	$$\lambda = -\rho\cdot\frac{A\bar{\bx} - a}{\|A\bar{\bx} - a\|},\quad \by = \argmax_{\by'\in \cY}\,\, \Phi(\bar{\bx},\by'),\quad\mbox{and}\quad \bx = \argmin_{\bx'\in\cX,A\bx' = a} \Phi(\bx',\bar \by)
	$$
	and applying the fact that 
	$\lambda^0 = 0,$ $\|\lambda-\lambda^0\|^2 = \rho^2,$ 
	$\|u^k\|^2 \leq  \ell^2$ and $\|A_1x_1+A_2x_2^0-b\|^2 = \|A_2(x_2^0-x_2)\|^2\leq \|A_2\|^2\cdot D_{\cX_2}^2$ proves the theorem.  
\end{proof}

\subsection{The  SEG-ADMM for smooth problem}
Due to the nonsmoothness of $\Phi(\bx,\cdot)$, our SSG-ADMM algorithm applies a supergradient ascent step to update $\by$, resulting in an $\cO(1/\sqrt{T})$ convergence rate. In this section, we show that an improved $\cO(1/T)$ convergence can be obtained by replacing the supergradient step with an extragradient step, given better smoothness condition. Based on this feature, we call the new algorithm \underbar{S}addle-point \underbar{E}xtra\underbar{G}radient \underbar{A}lternating \underbar{D}irection \underbar{M}ethod of \underbar{M}ultipliers (SEG-ADMM), as is decribed by Algorithm \ref{alg:SEG-ADMM}. 

\begin{algorithm2e}[H]
	\DontPrintSemicolon  
	\caption{The  SEG-ADMM Algorithm}
	\label{alg:SEG-ADMM}
	\textbf{input}: $\mathbf{x}^0\in\cX, \by^0\in\cY$, $\lambda^0=0\in\R^n$, $\gamma>0$, and matrices $G\succ0$ and $H_i\succ 0$.\\
	\For{$k = 0,...,T$}{
		Apply the gradient ascent step:\vspace{-0.1cm} 
		$$\hat \by^{k+1} = \argmin_{\by\in\cY} \,\,\frac{1}{2}\left\|\by-[\by^k + G^{-1}\cdot \nabla_\by \Phi(\mathbf{x}^k,\by^k)]\right\|^2_G. \vspace{-0.1cm}$$\\
		Apply the \texttt{Prox-ADMM} submodule:\vspace{-0.1cm}
		$$(\mathbf{x}^{k+1},\lambda^{k+1}) = \texttt{Prox-ADMM}(\mathbf{x}^k,\lambda^k;\hat \by^{k+1};\gamma, \{H_i\}_{i=1}^N).\vspace{-0.1cm}$$\\
		Apply the extra-gradient ascent step:\vspace{-0.1cm}
		$$\by^{k+1} = \argmin_{\by\in\cY} \,\,\frac{1}{2}\left\|\by-[\by^k + G^{-1}\cdot \nabla_\by \Phi(\mathbf{x}^{k+1},\hat \by^{k+1})]\right\|^2_G. \vspace{-0.1cm}$$}
	\textbf{output:} $\bar x = \frac{1}{T}\sum_{k = 1}^T\mathbf{x}^k$, $\bar \by = \frac{1}{T}\sum_{k = 1}^{T}\hat \by^k$.
\end{algorithm2e} 

Similar to the analysis of SSG-ADMM, the $\bx$ update of SEG-ADMM is fully characterized by Lemma \ref{lemma:2-ADMM-step} by setting $\tilde{\by} = \hat{\by}^{k+1}$.  We only need to analyze the $\by$ update in the following lemma. See the proof in Appendix \ref{appdx:lemma:SEG-step}.

\begin{lemma}
	\label{lemma:SEG-step}
	Suppose Assumption \ref{assumption:Lipschitz-1sided} holds. Let $\by^k$, $\hat \by^{k+1}$ and $\by^{k+1}$ be generated by Algorithm \ref{alg:SEG-ADMM}. Then for $\forall\by\in\cY$, it holds that 
	\begin{eqnarray}
		\label{lm:SEG-step}
		\Phi(\bx^{k+1},\by) - \Phi(\bx^{k+1},\hat \by^{k+1})
		& \leq & \frac{1}{2}\left(\|\by-\by^k\|^2_G - \|\by-\by^{k+1}\|^2_G\right) + \frac{L_\by}{2}\|\bx^k-\bx^{k+1}\|^2 \nonumber \\
		&  &  - \frac{1}{2}\big(\|\hat \by^{k+1}-\by^{k}\|^2_{G-L_\by I} +\|\hat \by^{k+1}-\by^{k+1}\|^2_{G-L_\by I}\big).
	\end{eqnarray}
\end{lemma}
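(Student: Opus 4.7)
My plan is to follow the classical extragradient analysis adapted to the $\by$-update, where $\hat\by^{k+1}$ plays the role of the prediction step and $\by^{k+1}$ the role of the corrector. The starting point is concavity of $\Phi(\bx^{k+1},\cdot)$ on $\cY$ (which is part of Assumption \ref{assumption:Convexity-1sided}), giving
\[
\Phi(\bx^{k+1},\by) - \Phi(\bx^{k+1},\hat\by^{k+1}) \leq \langle \nabla_\by\Phi(\bx^{k+1},\hat\by^{k+1}), \by - \hat\by^{k+1}\rangle.
\]
I would then decompose $\by-\hat\by^{k+1} = (\by-\by^{k+1}) + (\by^{k+1}-\hat\by^{k+1})$ and handle the two resulting inner products separately.

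For the first piece, the first-order optimality of the extragradient update defining $\by^{k+1}$ (projection of $\by^k + \sigma_\by^{-1}\nabla_\by\Phi(\bx^{k+1},\hat\by^{k+1})$ onto $\cY$) yields, for every $\by\in\cY$,
\[
\langle \nabla_\by\Phi(\bx^{k+1},\hat\by^{k+1}), \by-\by^{k+1}\rangle \leq \sigma_\by\langle \by^{k+1}-\by^k, \by-\by^{k+1}\rangle,
\]
and the three-point identity turns the right-hand side into $\tfrac{\sigma_\by}{2}(\|\by-\by^k\|^2 - \|\by-\by^{k+1}\|^2 - \|\by^{k+1}-\by^k\|^2)$, which already produces the first telescoping term of \eqref{lm:SEG-step}.

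For the second piece, I would insert the gradient $\nabla_\by\Phi(\bx^k,\by^k)$ used in the prediction step and split
\[
\langle \nabla_\by\Phi(\bx^{k+1},\hat\by^{k+1}), \by^{k+1}-\hat\by^{k+1}\rangle
= \langle \nabla_\by\Phi(\bx^{k+1},\hat\by^{k+1}) - \nabla_\by\Phi(\bx^k,\by^k), \by^{k+1}-\hat\by^{k+1}\rangle + \langle \nabla_\by\Phi(\bx^k,\by^k), \by^{k+1}-\hat\by^{k+1}\rangle.
\]
The first term is bounded by Young's inequality with weight $L_\by$ together with Assumption \ref{assumption:Lipschitz-1sided}, producing $\tfrac{L_\by}{2}(\|\bx^{k+1}-\bx^k\|^2 + \|\hat\by^{k+1}-\by^k\|^2) + \tfrac{L_\by}{2}\|\by^{k+1}-\hat\by^{k+1}\|^2$. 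The second term is handled by applying the optimality condition of the prediction step (testing it at $\by^{k+1}\in\cY$), which gives $\sigma_\by\langle \hat\by^{k+1}-\by^k, \by^{k+1}-\hat\by^{k+1}\rangle$, and another three-point identity converts this to $\tfrac{\sigma_\by}{2}(\|\by^{k+1}-\by^k\|^2 - \|\hat\by^{k+1}-\by^k\|^2 - \|\by^{k+1}-\hat\by^{k+1}\|^2)$.

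Finally I would combine the two pieces: the $+\tfrac{\sigma_\by}{2}\|\by^{k+1}-\by^k\|^2$ coming from the prediction step cancels the $-\tfrac{\sigma_\by}{2}\|\by^{k+1}-\by^k\|^2$ from the corrector step, and the Young contributions combine with the negative squared terms to form exactly $-\tfrac{\sigma_\by-L_\by}{2}(\|\hat\by^{k+1}-\by^k\|^2 + \|\by^{k+1}-\hat\by^{k+1}\|^2)$. The only subtle point, which I regard as the main obstacle, is calibrating the Young's inequality constant: it must be chosen to be exactly $L_\by$ so that the two negative quadratic terms coming from the prediction-step three-point identity absorb the Young slack and leave the symmetric coefficient $\tfrac{\sigma_\by-L_\by}{2}$ on both $\|\hat\by^{k+1}-\by^k\|^2$ and $\|\by^{k+1}-\hat\by^{k+1}\|^2$; any other split would leave an unbalanced residual and break the telescoping structure needed in Theorem-style arguments downstream.
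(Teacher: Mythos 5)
Your proposal is correct and follows essentially the same route as the paper's proof in Appendix \ref{appdx:lemma:SEG-step}: concavity at $\hat\by^{k+1}$, the two projection optimality conditions (with the prediction step tested at $\by^{k+1}$), the insertion of $\nabla_\by\Phi(\bx^k,\by^k)$, Young's inequality with weight exactly $L_\by$ combined with Assumption \ref{assumption:Lipschitz-1sided}, and the three-point identities producing the cancellation of $\pm\tfrac{\sigma_\by}{2}\|\by^{k+1}-\by^k\|^2$. The only cosmetic difference is the order in which the three inner products are grouped, and your observation that concavity actually comes from Assumption \ref{assumption:Convexity-1sided} rather than the cited Lipschitz assumption is accurate but immaterial.
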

It is worth noting that the error term $+\frac{L_\by}{2}\|\bx^k-\bx^{k+1}\|^2$ in \eqref{lm:SEG-step} will be canceled by the descent term $-\frac{1}{2}\|\bx^k-\bx^{k+1}\|^2_{H-L_\bx I}$ in \eqref{lm:2-ADMM-step}. This is the reason why choosing the proximal version of ADMM (\texttt{Prox-ADMM}) for $\bx$ update is essential.  With the Lemma \ref{lemma:2-ADMM-step} and Lemma \ref{lemma:SEG-step}, we can prove the following theorem.
\begin{theorem}[Convergence of SEG-ADMM]
	\label{theorem:complexity}
	Consider the problem \ref{prob:1-sided} with $N=2$. Suppose Assumptions \ref{assumption:Convexity-1sided} and \ref{assumption:Lipschitz-1sided} hold.  Suppose the output $(\bar{\bx},\bar \by)$ is returned by Algorithm \ref{alg:SEG-ADMM} after $T$ iterations. 
	As long as we choose $H_i \succeq  (L_\bx+L_\by)\cdot I, i=1,\cdots,N,$ and $G \succeq  L_\by\cdot I$, it holds for $\forall \rho>0$ that 
	\begin{eqnarray} 
		\max_{\by\in \cY} \Phi(\bar{\bx} ,\by) \!-\!\! \min_{\overset{\bx\in\cX}{A\bx= a}}\Phi(\bx,\bar \by)\!  + \! \rho\|A\bar{\bx} - a\|\leq \frac{\rho^2/\gamma + \gamma\|A_2\|^2 D_{\cX_2}^2 + \|H\| D_{\cX}^2+\|G\| D_{\cY}^2}{2T}.\nonumber
	\end{eqnarray}
	By Lemma \ref{lemma:opt-cond}, it takes $T=\cO(\epsilon^{-1})$ iterations to reach an $\epsilon$-saddle point. 
\end{theorem}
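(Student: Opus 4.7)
The plan is to combine Lemma \ref{lemma:2-ADMM-step} with Lemma \ref{lemma:SEG-step} term by term for each iteration $k$, telescope the resulting inequalities, and then convert the averaged inequality into the desired saddle-point bound via a judicious choice of the dual and primal test points. Specifically, I will apply Lemma \ref{lemma:2-ADMM-step} with $\tilde{\by} = \hat{\by}^{k+1}$ and $\sigma = \sigma_\bx$, so that its left-hand side reads $\Phi(\bx^{k+1},\hat{\by}^{k+1}) - \Phi(\bx,\hat{\by}^{k+1}) - \langle\lambda, A\bx^{k+1}-a\rangle$, and add to it the inequality from Lemma \ref{lemma:SEG-step}. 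The $\Phi(\bx^{k+1},\hat{\by}^{k+1})$ terms cancel, leaving $\Phi(\bx^{k+1},\by) - \Phi(\bx,\hat{\by}^{k+1}) - \langle\lambda, A\bx^{k+1}-a\rangle$ on the left.

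The crucial algebraic step, and really the heart of the proof, is the cancellation of the $\|\bx^k - \bx^{k+1}\|^2$ terms: Lemma \ref{lemma:2-ADMM-step} contributes $-\tfrac{\sigma_\bx - L_\bx}{2}\|\bx^k - \bx^{k+1}\|^2$, while Lemma \ref{lemma:SEG-step} contributes $+\tfrac{L_\by}{2}\|\bx^k - \bx^{k+1}\|^2$. Choosing $\sigma_\bx = L_\bx + L_\by$ makes this sum vanish, and choosing $\sigma_\by = L_\by$ kills the $-\tfrac{\sigma_\by - L_\by}{2}(\|\hat{\by}^{k+1} - \by^k\|^2 + \|\hat{\by}^{k+1} - \by^{k+1}\|^2)$ term. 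This is precisely why the proximal variant of ADMM (which generates the $-\tfrac{\sigma_\bx - L_\bx}{2}\|\bx^k - \bx^{k+1}\|^2$ slack) is indispensable; without it, the extragradient smoothness error in the $\by$-block would be uncontrolled.

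Next I will sum the resulting inequality for $k = 0, \ldots, T-1$ and divide by $T$. The four quadratic-distance differences telescope: $\|\lambda - \lambda^k\|^2 - \|\lambda - \lambda^{k+1}\|^2$ collapses to at most $\|\lambda\|^2$ (since $\lambda^0 = 0$); $\|A_1 x_1 + A_2 x_2^k - a\|^2$ telescopes and, using $A_1 x_1 + A_2 x_2 = a$ for the feasible test point, yields the bound $\|A_2\|^2 D_{\cX_2}^2$; and the $\bx$ and $\by$ distance differences telescope to at most $D_\cX^2$ and $D_\cY^2$ respectively. On the left, convexity of $\Phi(\cdot,\by)$ and concavity of $\Phi(\bx,\cdot)$ together with Jensen's inequality (applied to the ergodic averages $\bar{\bx} = \tfrac{1}{T}\sum_{k=1}^T \bx^k$ and $\bar{\by} = \tfrac{1}{T}\sum_{k=1}^T \hat{\by}^k$) give $\Phi(\bar{\bx},\by) - \Phi(\bx,\bar{\by}) - \langle\lambda, A\bar{\bx}-a\rangle$ as a lower bound.

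Finally, I choose the test points exactly as in the proof of Theorem \ref{theorem:complexity-ssg}: $\lambda = -\rho\cdot (A\bar{\bx}-a)/\|A\bar{\bx}-a\|$ (so that $-\langle\lambda, A\bar{\bx}-a\rangle = \rho\|A\bar{\bx}-a\|$ and $\|\lambda\|^2 = \rho^2$), $\by = \argmax_{\by'\in\cY} \Phi(\bar{\bx},\by')$, and $\bx = \argmin_{\bx'\in\cX,\, A\bx'=a} \Phi(\bx',\bar{\by})$. Assembling the telescoped bound produces exactly the stated right-hand side $\frac{\rho^2/\gamma + (L_\bx+L_\by)D_\cX^2 + \gamma\|A_2\|^2 D_{\cX_2}^2 + L_\by D_\cY^2}{2T}$. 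Invoking Lemma \ref{lemma:opt-cond} with $\rho$ chosen larger than the subgradient norm bounds $\|\omega_1\|, \|\omega_2\|$ converts this into an $\cO(1/T)$ rate, so $T = \cO(\epsilon^{-1})$ iterations suffice for an $\epsilon$-saddle point. The only non-routine part is the cancellation argument described above; after that, the steps mirror the SSG-ADMM proof with $D_\cY \ell/\sqrt{T}$ replaced by $L_\by D_\cY^2/(2T)$ thanks to the extragradient's quadratic regret bound.
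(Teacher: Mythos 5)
Your proposal is correct and is precisely the argument the paper intends: the paper omits the proof of this theorem, referring to the proof of Theorem \ref{theorem:complexity-ssg}, and your write-up is exactly that argument with Lemma \ref{lemma:SEG-step} (evaluated at $\tilde\by=\hat\by^{k+1}$) in place of Lemma \ref{lemma:SG-step}, including the key cancellation of $\pm\frac{L_\by}{2}\|\bx^k-\bx^{k+1}\|^2$ under $\sigma_\bx=L_\bx+L_\by$ and the vanishing of the $\by$-slack under $\sigma_\by=L_\by$. The telescoping, the choice of test points $\lambda$, $\bx$, $\by$, and the final constants all match the stated bound.
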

\noindent The proof of this theorem is similar to that of Theorem \ref{theorem:complexity-ssg}, thus we omit it here. 

\subsection{Discussion} 
Given the  analysis of the SSG-ADMM and SEG-ADMM algorithms, we can see that the traditional ADMM algorithm can be easily extended to the multi-block affinely constrained minimax problems of the form \eqref{prob:1-sided}, by applying a proximal ADMM update on the affinely constrained variable $\bx$, while making a simple supergradient update for $\by$ or an extragradient update for $\by$ that sandwiches the $\bx$ update. Therefore, we would like to make a brief discussion on the pros and cons of this approach.\vspace{0.2cm}

\noindent{\bf Versatility of analysis.}
Throughout Section \ref{section:1side-2block}, and the analysis of the SEG-ADMM with $N\geq3$ in Appendix \ref{appdx:Ngeq3}, it can be observed that the analysis of $\bx$ blocks' \texttt{Prox-ADMM} step and the analysis of $\by$ block's supergradient/extragradient step are almost independent. This  implies that many existing results of ADMM under different conditions can be incorporated into the analysis of the $\bx$ update in  SSG-ADMM and SEG-ADMM, making the algorithm framework versatile for many potential extensions.\vspace{0.2cm}

\noindent{\bf Restriction in $N$.}
Like classical ADMM in convex optimization, the  SSG-ADMM  and   SEG-ADMM can easily diverge when $N\geq3$, unless additional assumptions or algorithm adaptation is available. For example, an $\cO(1/T)$ convergence can be derived for SEG-ADMM if we adopt the partial strong convexity condition for problem \eqref{prob:1-sided}. When such condition does not hold, we can adopt the perturbation strategy to obtain a worse $\cO(1/\sqrt{T})$ convergence rate. We place these results in Appendix \ref{appdx:Ngeq3}. \vspace{0.2cm}

\noindent{\bf Inability against general problem \eqref{prob:main}.} In contrast to the versatility of SSG-ADMM and SEG-ADMM in terms of the simpler problem \eqref{prob:1-sided}, the asymmetry of ADMM updates makes it very hard to be analyzed when we extend it to the general problem \eqref{prob:main}, where both $\bx$ and $\by$ have affinely constrained multi-block structure. This is also the reason why we propose the EGMM algorithm in the next section.

\section{The extra-gradient method of multipliers (EGMM)}
\label{section:two-sided}
As discussed in the last section, it is hard for the ADMM-type methods to handle the general problem \eqref{prob:main}. To resolve this difficulty, we introduce the \underline{E}xtra-\underline{G}radient \underline{M}ethod of \underline{M}ultipliers (EGMM), which takes a strategy that represents a significant departure from the ADMM-type methods in section \ref{section:1side-2block}. Note that problem \eqref{prob:main} can be compactly written as \eqref{prob:main-simple}, which is rewritten below for readers' convenience:
\begin{eqnarray} 
	\min_{\mathbf{x}\in\cX}\max_{\by\in\cY}\Phi(\bx,\by):= h(\bx) + \Psi(\bx,\by) - g(\by)\quad\mbox{s.t.}\quad A\bx = a, B\by= b.\nonumber
\end{eqnarray} 
For this problem, we make the following assumptions. 
\begin{assumption}
	\label{assumption:Convexity}
	$h_i(\cdot)$ and $g_j(\cdot)$ are convex and lower semi-continuous functions, for $i = 1,...,N$ and $j = 1,...,M$.
\end{assumption}

\begin{assumption}
	\label{assumption:Lipschitz-psi}
	We assume $\Psi(\cdot,\by)$ is  convex in $\cX$ for  $\forall\by\in\cY$ and $\Psi(\bx,\cdot)$ is concave in $\cY$ for $\forall\bx\in\cX$. The gradient of $\Psi$ is $L$-Lipschitz continuous, i.e., 
	$$\|\nabla \Psi(\bx,\by) - \nabla \Psi(\bx',\by')\|\leq L\sqrt{\|\bx-\bx'\|^2+\|\by-\by'\|^2},\quad\forall\bx,\bx'\in\cX, \,\,\forall \by,\by'\in\cY,$$
	where $\nabla \Psi(\bx,\by) = (\nabla_\bx \Psi(\bx,\by),\nabla \Psi_\by(\bx,\by))$ is the full gradient of the coupling term.
	The overall objective function $\Phi(\cdot,\cdot)$ is bounded over $\cX\times\cY$.
\end{assumption}
\noindent To derive the EGMM algorithm, let us first rewrite \eqref{prob:main-simple} as a new minimax problem:
\begin{eqnarray} 
	\label{prob:main-Lag-form}
	\mathop{\mathrm{min}}_{\overset{\mu\in\RR^m}{\mathbf{x}\in\cX}}\,\,\mathop{\mathrm{max}}_{\overset{\lambda\in\RR^n}{\by\in\cY}} \,\,\cL(\bx,\!\by;\!\lambda,\!\mu)\!:=\! h(\bx) \!+\! \Psi(\bx,\!\by) \!-\! g(\by) \!-\! \langle A\bx \!-\! a,\!\lambda\rangle \!+\!  \langle B\by \!-\! b,\!\mu\rangle.
\end{eqnarray}
Then our EGMM algorithm attempts to solve the original problem \eqref{prob:main-simple} by working on problem \eqref{prob:main-Lag-form}. However, we should also notice that, as long as $(\bar \bx,\bar \by;\bar{\lambda},\bar \mu)$ does not satisfy $A\bar{\bx} = a$ and $B\bar{\by}=b$ simultaneously, the duality gap of \eqref{prob:main-Lag-form} is infinity, i.e., 
$$\mathop{\mathrm{max}}_{\overset{\lambda\in\RR^n}{\by\in\cY}} \,\,\cL(\bar{\bx},\by;\bar \mu,\lambda) \,\,-\,\, \mathop{\mathrm{min}}_{\overset{\mu\in\RR^m}{\mathbf{x}\in\cX}} \,\,\cL({\bx},\bar\by; \mu,\bar\lambda)\,\, = \,\,+\infty.$$
Therefore, one should not simply apply the existing analysis of convex-concave saddle point problems and try to prove the convergence in terms of the duality gap. Instead, we should carefully utilize the special structure of the original problem, and try to analyze the convergence to an $\epsilon$-saddle point in the sense of Definition \ref{definition:eps}. If we denote
$z = \begin{bmatrix}
	\bx^\top,\by^\top,\lambda^\top,\mu^\top
\end{bmatrix}^\top\in\cZ:=\cX\times\cY\times\R^n\times\R^m$, $R(z):= h(\bx) + g(\by)$,
$$F(z) = \begin{bmatrix}
	\,\,\,\,\,\nabla_\bx \Psi(\bx,\by) - A^\top\lambda\\
	-\nabla_\by \Psi(\bx,\by) - B^\top\mu\\
	A\bx-a\\
	B\by-b
\end{bmatrix},\quad\mbox{and}\quad H = \begin{bmatrix}
	\sigma_\bx  I_{d_\bx} & & & \\
	& \sigma_\by  I_{d_\by} & & \\
	& & \sigma_\lambda I_n & \\
	& & & \sigma_\mu I_m
\end{bmatrix} $$
where $d_\bx = \sum_{i}d_x^i$, $d_\by = \sum_{j}d_y^j$, $I_d$ is a $d\times d$ identity matrix, and $\sigma_\bx,\sigma_\by,\sigma_\lambda,\sigma_\mu>0$ are some positive constants. We can simply write the EGMM algorithm as
\begin{equation}
	\label{algo:main-simplified}
	\begin{cases}\hat z^{k+1}  = \argmin_{z\in\cZ} \frac{1}{2}\left\|z - [z^k - H^{-1}\cdot F(z^k)]\right\|^2_{H} + R(z),\vspace{0.1cm}\\
		z^{k+1}  = \argmin_{z\in\cZ} \frac{1}{2}\left\|z - [z^k - H^{-1}\cdot F(\hat z^{k+1})]\right\|^2_H + R(z).
	\end{cases}
\end{equation}
Without the augmented Lagrangian terms that couple the multiple blocks of $\bx$ and $\by$ together, i.e. $\|\sum_i A_ix_i-a\|^2$ and $\|\sum_j B_jy_j-b\|^2$, the above subproblem is a group of separable small subproblems. Assuming the solvability of  these subproblems,  we describe the EGMM method as Algorithm \ref{alg:main}. 

\begin{lemma}
	\label{lemma:EG}
	Let $z^k$, $\hat z^{k+1}$ and $z^{k+1}$ be generated by Algorithm \ref{alg:main}. Then  
	\begin{align}
		\label{lm:EG}
		&\quad\,\, R(\hat z^{k+1}) - R(z) + \big\langle F(\hat z^{k+1}),\hat z^{k+1} - z\big\rangle\\
		&\leq \frac{1}{2}\|z^k - z\|^2_H  - \frac{1}{2}\|z - z^{k+1}\|^2_H
		- \frac{1}{2}\|z^k - \hat z^{k+1}\|^2_{H-G} -\frac{1}{2}\|\hat z^{k+1} - z^{k+1}\|^2_{H-G}\nonumber
	\end{align}
	for $\forall z\!\in\!\cZ$, where $G = \mathrm{Diag}\left\{\frac{L+\|A\|}{2}I_{\bx},\frac{L+\|B\|}{2}I_\by,\frac{\|A\|}{2}I_n,\frac{\|B\|}{2}I_m\right\}$ is a diagonal matrix.
\end{lemma}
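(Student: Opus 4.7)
Since $H$ is block-diagonal and $R(z)=h(\bx)+g(\by)$ is block-separable, the per-block updates \eqref{eqn:alg-x0.5}--\eqref{eqn:alg-mul1} coincide with the two unified prox steps
\begin{align*}
\hat z^{k+1} &= \argmin_{z\in\cZ}\,\tfrac12\|z-(z^k-H^{-1}F(z^k))\|_H^2+R(z),\\
z^{k+1} &= \argmin_{z\in\cZ}\,\tfrac12\|z-(z^k-H^{-1}F(\hat z^{k+1}))\|_H^2+R(z),
\end{align*}
exactly as written in \eqref{algo:main-simplified}. The plan is to apply the classical Nemirovski-style extragradient analysis in the $H$-geometry: read off the first-order optimality of each prox step, combine them, flatten the resulting $H$-inner products via the three-point identity, and finally bound a residual cross term using the Lipschitzness of $\nabla\Psi$ together with the operator norms of $A$ and $B$; the last step is where the matrix $G$ emerges.

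\textbf{The optimality and combination step.} The optimality of the two prox problems gives, for every $z\in\cZ$,
\begin{align*}
R(\hat z^{k+1})-R(z)+\langle F(z^k),\hat z^{k+1}-z\rangle &\leq \langle z^k-\hat z^{k+1},\hat z^{k+1}-z\rangle_H,\\
R(z^{k+1})-R(z)+\langle F(\hat z^{k+1}),z^{k+1}-z\rangle &\leq \langle z^k-z^{k+1},z^{k+1}-z\rangle_H.
\end{align*}
I will specialize the first at $z=z^{k+1}$, add it to the second at a generic $z$, and use the algebraic identity $\langle F(z^k),\hat z^{k+1}-z^{k+1}\rangle+\langle F(\hat z^{k+1}),z^{k+1}-z\rangle = \langle F(\hat z^{k+1}),\hat z^{k+1}-z\rangle - \langle F(z^k)-F(\hat z^{k+1}),z^{k+1}-\hat z^{k+1}\rangle$ to isolate the target $\langle F(\hat z^{k+1}),\hat z^{k+1}-z\rangle$ on the left. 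The two $H$-inner products on the right are then collapsed by the three-point identity $\langle a-b,c-a\rangle_H=\tfrac12(\|b-c\|_H^2-\|c-a\|_H^2-\|a-b\|_H^2)$; the two $\|z^k-z^{k+1}\|_H^2$ contributions cancel, producing
\[
\tfrac12\|z^k-z\|_H^2-\tfrac12\|z^{k+1}-z\|_H^2-\tfrac12\|z^k-\hat z^{k+1}\|_H^2-\tfrac12\|\hat z^{k+1}-z^{k+1}\|_H^2
\]
plus the leftover cross term $\langle F(z^k)-F(\hat z^{k+1}),z^{k+1}-\hat z^{k+1}\rangle$.

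\textbf{The hard part: birth of $G$.} The remaining technical work is to prove
\[
\langle F(z^k)-F(\hat z^{k+1}),z^{k+1}-\hat z^{k+1}\rangle \leq \tfrac12\|z^k-\hat z^{k+1}\|_G^2 + \tfrac12\|\hat z^{k+1}-z^{k+1}\|_G^2,
\]
so that $-\tfrac12\|\cdot\|_H^2+\tfrac12\|\cdot\|_G^2$ assembles into $-\tfrac12\|\cdot\|_{H-G}^2$ on both sides. To do this, I decompose $F=T+N$ with $T(z)=(\nabla_\bx\Psi(\bx,\by),-\nabla_\by\Psi(\bx,\by),0,0)$ and $N$ the skew-symmetric linear block operator carrying $\pm A^\top$, $\pm B^\top$, $A$, $B$. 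For the $T$-contribution, Assumption~\ref{assumption:Lipschitz-psi} and Cauchy--Schwarz yield $\|T(z^k)-T(\hat z^{k+1})\|\leq L\sqrt{\|\bx^k-\hat\bx^{k+1}\|^2+\|\by^k-\hat\by^{k+1}\|^2}$; pairing against the $(\bx,\by)$-components of $z^{k+1}-\hat z^{k+1}$ and applying $ab\leq\tfrac12(a^2+b^2)$ distributes $\tfrac{L}{2}$ to both the $\bx$ and $\by$ diagonals on both sides. For the $N$-contribution, the four cross pairings $\langle A^\top(\lambda^k-\hat\lambda^{k+1}),\bx^{k+1}-\hat\bx^{k+1}\rangle$, $\langle A(\bx^k-\hat\bx^{k+1}),\lambda^{k+1}-\hat\lambda^{k+1}\rangle$ and the $B,\by,\mu$ analogues are controlled by $\|A\|$ (resp.\ $\|B\|$) times norm-products, then split symmetrically by Young's inequality to contribute $\tfrac{\|A\|}{2}$ to the $\bx$ and $\lambda$ diagonals and $\tfrac{\|B\|}{2}$ to the $\by$ and $\mu$ diagonals. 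The sums $\tfrac{L+\|A\|}{2}$, $\tfrac{L+\|B\|}{2}$, $\tfrac{\|A\|}{2}$, $\tfrac{\|B\|}{2}$ then reproduce the diagonal entries of $G$ exactly.

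\textbf{Finish and anticipated difficulty.} Plugging the cross-term bound back into the inequality from the combination step yields precisely \eqref{lm:EG}. The genuine obstacle is not the structure of the argument, which is a standard mirror-prox template, but rather the careful block-wise bookkeeping in the third paragraph: one must split every Young-inequality estimate evenly between the $(z^k-\hat z^{k+1})$ and $(\hat z^{k+1}-z^{k+1})$ sides, and must match each $A$- or $B$-pairing with the right diagonal of $G$, so that both quadratic forms on the right assemble into the same $\|\cdot\|_G^2$. Everything else is routine.
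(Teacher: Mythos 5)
Your proposal is correct and follows essentially the same route as the paper's proof: prox-step optimality conditions, specialization of the first at $z=z^{k+1}$, summation, three-point identities in the $H$-norm, and a Young-inequality bound on the cross term $\langle F(z^k)-F(\hat z^{k+1}),z^{k+1}-\hat z^{k+1}\rangle$ that produces exactly the diagonal entries of $G$. The only cosmetic differences are that you fold the convexity of $R$ directly into the prox inequalities (the paper keeps explicit subgradients and invokes convexity afterwards) and that you name the decomposition $F=T+N$ into the smooth monotone part and the skew linear part, which the paper carries out implicitly in a single display.
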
 
\begin{proof} 
	Consider the compact form of Algorithm \ref{alg:main} summarized in \eqref{algo:main-simplified}. The KKT condition of the two subproblems implies that $\exists \hat v^{k+1}\in\partial R(\hat z^{k+1})$ and $v^{k+1}\in\partial R(z ^{k+1})$ s.t. 
	\begin{equation}
		\label{thm:pf-opt-0.5}
		\left\langle H(\hat z^{k+1} - z^k) + F(z^k) + \hat v^{k+1}, z-\hat z^{k+1}\right\rangle \geq 0, \quad\forall z\in\cZ
	\end{equation}
	\begin{equation}
		\label{thm:pf-opt-1}
		\left\langle H( z^{k+1} - z^k) + F(\hat z^{k+1}) + v^{k+1}, z - z^{k+1}\right\rangle \geq 0, \quad\forall z\in\cZ
	\end{equation}
	Set $z = z^{k+1}$ in \eqref{thm:pf-opt-0.5} yields
	\begin{eqnarray} 
		\big\langle F(z^k) + \hat v^{k+1}, \hat z^{k+1}-z^{k+1}\big\rangle & \leq & \left\langle H(\hat z^{k+1} - z^k), z^{k+1}-\hat z^{k+1}\right\rangle \nonumber\\
		& = & \frac{1}{2}\|z^k - z^{k+1}\|^2_H - \frac{1}{2}\|z^k - \hat z^{k+1}\|^2_H - \frac{1}{2}\|\hat z^{k+1} - z^{k+1}\|^2_H\nonumber.
	\end{eqnarray}
	Similarly, \eqref{thm:pf-opt-1} can be equivalently written as 
	\begin{eqnarray} 
		\big\langle F(\hat z^{k+1}) + v^{k+1}, z^{k+1}-z\big\rangle & \leq & \frac{1}{2}\|z^k - z\|^2_H - \frac{1}{2}\|z^k - z^{k+1}\|^2_H - \frac{1}{2}\|z - z^{k+1}\|^2_H\nonumber.
	\end{eqnarray}
	Summing up the above two inequalities yields:
	\begin{eqnarray}
		\label{thm:pf-2}
		&&\frac{1}{2}\|z^k - z\|^2_H  - \frac{1}{2}\|z - z^{k+1}\|^2_H- \frac{1}{2}\|z^k - \hat z^{k+1}\|^2_H - \frac{1}{2}\|\hat z^{k+1} - z^{k+1}\|^2_H \\
		& \geq & \big\langle F(\hat z^{k+1}) + v^{k+1}, z^{k+1}-z\big\rangle + \big\langle F(z^k) + \hat v^{k+1}, \hat z^{k+1}-z^{k+1}\big\rangle\nonumber\\
		& \overset{(i)}{\geq} & \big\langle F(\hat z^{k+1}), z^{k+1}-z\big\rangle + \big\langle F(z^k), \hat z^{k+1}-z^{k+1}\big\rangle + R(\hat z^{k+1}) - R(z)\nonumber\\
		& = & R(\hat z^{k+1}) - R(z) + \big\langle F(\hat z^{k+1}),\hat z^{k+1} - z\big\rangle + \big\langle F(\hat z^{k+1})-F(z^{k}), z^{k+1} - \hat z^{k+1}\big\rangle\nonumber\\
		& \overset{(ii)}{\geq} & R(\hat z^{k+1}) - R(z) + \big\langle F(\hat z^{k+1}),\hat z^{k+1} - z\big\rangle -\frac{1}{2}\|z^{k+1} - \hat z^{k+1}\|^2_G - \frac{1}{2}\|z^k - \hat z^{k+1}\|^2_G\nonumber
	\end{eqnarray}
	where (i) is due to the convexity of $R$:
	\begin{eqnarray}
		\big\langle  v^{k+1}, z^{k+1}-z\big\rangle + \big\langle   \hat v^{k+1}, \hat z^{k+1}-z^{k+1}\big\rangle &\geq& R(z^{k+1})-R(z) + R(\hat z^{k+1}) - R(z^{k+1})\nonumber\\
		& = & R(\hat z^{k+1}) - R(z)\nonumber
	\end{eqnarray}
	and (ii) is due to the Lipschitz continuity of $\nabla\Psi$ and Cauchy inequality:
	\begin{eqnarray}
		&&\big\langle F(\hat z^{k+1})-F(z^{k}),z^{k+1} - \hat z^{k+1}\big\rangle \nonumber\\
		&=& \big\langle \nabla_\bx\Psi(\hat \bx^{k+1},\hat \by^{k+1}) - \nabla_\bx\Psi(\bx^{k},\by^{k}) - A^\top(\hat\lambda^{k+1}-\lambda^{k}),\bx^{k+1} - \hat \bx^{k+1}\big\rangle\nonumber\\
		& & \!\!\!\!\!+ \big\langle \nabla_\by\Psi(\bx^{k},\by^{k}) - \nabla_\by\Psi(\hat \bx^{k+1},\hat \by^{k+1}) - B^\top(\hat\mu^{k+1}-\mu^{k}),\by^{k+1} - \hat \by^{k+1}\big\rangle\nonumber\\
		&& \!\!\!\!\!+ \big\langle A(\hat\bx^{k+1} - \bx^{k}),\lambda^{k+1} - \hat{\lambda}^{k+1}\big\rangle + \big\langle B(\hat\by^{k+1} - \by^{k}),\mu^{k+1} - \hat{\mu}^{k+1}\big\rangle\nonumber\\
		& \geq & -\frac{1}{2L}\|\nabla_\bx\Psi(\hat \bx^{k+1},\hat \by^{k+1}) - \nabla_\bx\Psi(\bx^{k},\by^{k})\|^2-\frac{L+\|A\|}{2}\|\hat{\bx}^{k+1} - \bx^{k+1}\|^2 -  \frac{\|A\|}{2}\|\hat\lambda^{k+1}-\lambda^{k}\|^2 \nonumber\\
		& & -\frac{1}{2L}\|\nabla_\by\Psi(\hat \bx^{k+1},\hat \by^{k+1}) - \nabla_\by\Psi(\bx^{k},\by^{k})\|^2-\frac{L+\|B\|}{2}\|\hat{\by}^{k+1} - \by^{k+1}\|^2 -  \frac{\|B\|}{2}\|\hat\mu^{k+1}-\mu^{k}\|^2\nonumber\\
		& & - \frac{\|A\|}{2}\|\hat\lambda^{k+1}-\lambda^{k+1}\|^2- \frac{\|B\|}{2}\|\hat\mu^{k+1}-\mu^{k+1}\|^2-\frac{\|A\|}{2}\|\hat{\bx}^{k+1} - \bx^{k}\|^2-\frac{\|B\|}{2}\|\hat{\by}^{k+1} - \by^{k}\|^2\nonumber\\
		&\geq& -\frac{1}{2}\|z^{k+1} - \hat z^{k+1}\|^2_G - \frac{1}{2}\|z^k - \hat z^{k+1}\|^2_G.\nonumber
	\end{eqnarray} 
	Hence we complete the proof. 
\end{proof}
Until this point, the analysis of Lemma \ref{lemma:EG} follows that of the extragradient (EG) method, except for additional effort to deal with the positive definite scaling matrix $H$ and proxiaml-gradient step of \eqref{algo:main-simplified} instead of the simple gradient step of EG. In the next step, our analysis diverges from the existing methods by skipping the convergence analysis of the duality gap as well as that of the multiplier sequence $\{\hat \lambda^k,\hat\mu^k\}_{k=1,2,...}$. Instead, we utilize the structure of the original problem \eqref{prob:main} and only focus on the convergence of $\{\hat\bx^k,\hat\by_k\}_{k=1,2,...}$. We present the result in the next theorem.\vspace{0.2cm}

\begin{algorithm2e}
	\DontPrintSemicolon  
	\caption{The  EGMM algorithm}
	\label{alg:main}
	\textbf{input}: ${\mathbf{x}}^0\in\cX$, $\by^0\in\cY$,  $\lambda^0=0\in\R^n$, $\mu^0=0\in\R^m$,  matrix $H$.\\
	\For{$k=0,...,T-1$}{
		Update the variable blocks by one proximal gradient step:\\
		\For{$i = 1,2,...,N$, $j = 1,2,...,M$}{
			\vspace{0.1cm} Denote $\begin{cases}F_{x_i} := (\nabla_{\bx_i}\Psi(\bx^k,\by^k) - A_i^\top\lambda^k\\
				F_{y_j} := \nabla_{\by_j}\Psi(\bx^k,\by^k) - B_j^\top\mu^k
			\end{cases}$ $\!\!\!\!\!\!$for $\forall i,j$. Then compute\vspace{-0.1cm}
			\begin{equation}
				\label{eqn:alg-x0.5}
				\hat x_i^{k+1} = \argmin_{x_i\in\cX_i} \frac{\sigma_\bx}{2}\left\|x_i - [x_i^k - \sigma_\bx^{-1}\cdot F_{x_i}] \right\|^2 + h_i(x_i) \vspace{-0.4cm}
			\end{equation}
			\begin{equation}
				\label{eqn:alg-y0.5}
				\hat y_j^{k+1} = \argmin_{y_j\in\cY_j} \frac{\sigma_\by}{2}\left\|y_j - [y_i^k + \sigma_\by^{-1}\cdot F_{y_j}] \right\|^2 + g_j(y_j)\vspace{-0.4cm}
			\end{equation} 
		}
		\vspace{0.1cm}Update the multipliers by one gradient 
		\begin{equation}
			\label{eqn:alg-mul0.5}
			\begin{cases}
				\hat \lambda^{k+1} = \lambda^k - \sigma_\lambda^{-1}\cdot\big(\sum_{i=1}^NA_ix_i^k - a\big)\\
				\hat\mu^{k+1} = \mu^k - \sigma_\mu^{-1}\cdot\big(\sum_{j = 1}^MB_jy_j^k - b\big)
			\end{cases}
		\end{equation}\\
		Update the variable blocks by one proximal gradient step:\vspace{0.1cm} \\
		\For{$i = 1,2,...,N$, $j = 1,2,...,M$}{
			\vspace{0.1cm} Denote $\begin{cases}
				\hat F_{x_i} := \nabla_{\bx_i}\Psi(\hat\bx^{k+1},\hat\by^{k+1}) - A_i^\top\hat\lambda^{k+1}\\
				F_{y_j}:=\nabla_{\by_j}\Psi(\hat\bx^{k+1}\!,\hat\by^{k+1}) - B_j^\top\hat\mu^{k+1}
			\end{cases}$$\!\!\!\!\!$for $\forall i,j$. Then compute \vspace{-0.1cm}
			\begin{equation}
				\label{eqn:alg-x1}
				x_i^{k+1} = \argmin_{x_i\in\cX_i} \frac{\sigma_\bx}{2} \big\|x_i \!-\! [x_i^k \!-\! \sigma_\bx^{-1}\!\cdot\hat F_{x_i}]\big\|^2 \!\!+\! h_i(x_i)\vspace{-0.4cm}
			\end{equation}
			\begin{equation}
				\label{eqn:alg-y1}
				\!y_j^{k+1} \!= \argmin_{y_j\in\cY_j} \frac{\sigma_\by}{2}\!\left\|y_j \!-\! [y_i^k \!+\! \sigma_\by^{-1}\!\cdot \hat F_{y_j}] \right\|^2 \!\!+\! g_j(y_j)\vspace{-0.4cm}
			\end{equation} 
		}
		\vspace{0.1cm}Update the multipliers  by one gradient step:
		\begin{equation}
			\label{eqn:alg-mul1}
			\begin{cases}
				\lambda^{k+1} = \lambda^k - \sigma_\lambda^{-1}\cdot\big(\sum_{i=1}^NA_i\hat x_i^{k+1} - a\big)\\
				\mu^{k+1} = \mu^k - \sigma_\mu^{-1}\cdot\big(\sum_{j = 1}^MB_j\hat y_j^{k+1} - b\big)
			\end{cases} \vspace{-0.4cm}
	\end{equation}}
	{\bf output} $\bar{\bx } = \frac{1}{T}\sum_{k = 1}^T\hat{\bx}^k$ and $\bar{\by } = \frac{1}{T}\sum_{k = 1}^T\hat{\by}^k$.\vspace{-0.0cm}
\end{algorithm2e}  

\begin{theorem}[Convergence of EGMM]
	\label{theorem:EGMM}
	For problem \eqref{prob:main} with general block numbers $N,M\geq1$, suppose Assumption \ref{assumption:Convexity} and \ref{assumption:Lipschitz-psi} hold. 
	Let $(\bar \bx,\bar{\by})$ be generated by Algorithm \ref{alg:main} after $T$ iterations, with $\sigma_\bx = \frac{L+\|A\|}{2}, \sigma_\by = \frac{L+\|B\|}{2},\sigma_\lambda = \frac{\|A\|}{2},\sigma_\mu = \frac{\|B\|}{2}$. 
	Then it holds for any $\rho>0$ that \vspace{-0.1cm}
	\begin{eqnarray}
		\max_{\overset{\by\in\cY}{B\by = b}}\Phi(\bar \bx,\by) - \min_{\overset{\bx\in\cX}{A\bx = a}}\Phi(\bx,\bar \by)\!&\!+\!&\!\rho\|A\bar\bx-a\| + \rho\|B\bar\by-b\|\nonumber\\
		&\leq&\frac{L+\|A\|}{ 4T}\cdot D_\cX^2 + \frac{L+\|B\|}{ 4T}\cdot D_\cY^2 + \frac{\|A\|+\|B\|}{2T}\cdot \rho^2.\nonumber
	\end{eqnarray}
	Therefore, it takes $T=\cO(\epsilon^{-1})$ iterations to reach an $\epsilon$-saddle point. 
\end{theorem}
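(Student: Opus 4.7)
The plan is to build on the one-step variational inequality in Lemma \ref{lemma:EG} and to exploit the specific block structure of the saddle operator $F$ in order to convert the VI-style bound into a combined bound on the primal saddle gap and the constraint violations $\|A\bar\bx-a\|$ and $\|B\bar\by-b\|$. First I would observe that with the prescribed diagonal choice of $H$ one has exactly $H=G$, hence $H-G=0$, so the two nonpositive quadratics in \eqref{lm:EG} vanish and the one-step bound collapses to
\begin{equation*}
R(\hat z^{k+1})-R(z)+\langle F(\hat z^{k+1}),\hat z^{k+1}-z\rangle \leq \tfrac{1}{2}\|z^k-z\|_H^2-\tfrac{1}{2}\|z-z^{k+1}\|_H^2.
\end{equation*}

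Next I would expand $\langle F(\hat z^{k+1}),\hat z^{k+1}-z\rangle$ block by block. Convexity of $\Psi(\cdot,\by)$ and concavity of $\Psi(\bx,\cdot)$ give $\langle\nabla_\bx\Psi(\hat\bx^{k+1},\hat\by^{k+1}),\hat\bx^{k+1}-\bx\rangle\geq \Psi(\hat\bx^{k+1},\hat\by^{k+1})-\Psi(\bx,\hat\by^{k+1})$ and the mirror bound on the $\by$-block; together with $R(\hat z^{k+1})-R(z)$ these reassemble into the primal gap $\Phi(\hat\bx^{k+1},\by)-\Phi(\bx,\hat\by^{k+1})$. The multiplier-coupled contributions regroup so that the cross pieces $\langle\hat\lambda^{k+1},A\hat\bx^{k+1}\rangle$ and $\langle\hat\mu^{k+1},B\hat\by^{k+1}\rangle$ cancel, leaving only $\langle\hat\lambda^{k+1},A\bx-a\rangle-\langle\lambda,A\hat\bx^{k+1}-a\rangle+\langle\hat\mu^{k+1},B\by-b\rangle-\langle\mu,B\hat\by^{k+1}-b\rangle$. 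Summing over $k=0,\ldots,T-1$, dividing by $T$, and using Jensen's inequality on the convex-concave $\Phi$ produces
\begin{equation*}
\Phi(\bar\bx,\by)-\Phi(\bx,\bar\by)+\langle\bar\lambda,A\bx-a\rangle-\langle\lambda,A\bar\bx-a\rangle+\langle\bar\mu,B\by-b\rangle-\langle\mu,B\bar\by-b\rangle \leq \tfrac{1}{2T}\|z^0-z\|_H^2,
\end{equation*}
valid for every $z=(\bx,\by,\lambda,\mu)\in\cZ$.

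The finishing touch is the standard tuning of the test point: take $\bx\in\argmin_{\bx\in\cX,\,A\bx=a}\Phi(\bx,\bar\by)$ and $\by\in\argmax_{\by\in\cY,\,B\by=b}\Phi(\bar\bx,\by)$, which kill the two terms dotted against $\bar\lambda$ and $\bar\mu$; then select $\lambda=-\rho(A\bar\bx-a)/\|A\bar\bx-a\|$ and $\mu=-\rho(B\bar\by-b)/\|B\bar\by-b\|$ to produce exactly $\rho\|A\bar\bx-a\|+\rho\|B\bar\by-b\|$ on the left-hand side. Using $\lambda^0=\mu^0=0$ together with the diameter bounds $D_\cX,D_\cY$, the right-hand side is bounded by $(\sigma_\bx D_\cX^2+\sigma_\by D_\cY^2+(\sigma_\lambda+\sigma_\mu)\rho^2)/(2T)$, and substituting the prescribed $\sigma$'s delivers the advertised rate.

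I expect the block-by-block manipulation of $\langle F(\hat z^{k+1}),\hat z^{k+1}-z\rangle$ to be the most delicate step, since one must verify that the running multipliers $\hat\lambda^{k+1},\hat\mu^{k+1}$ (whose magnitudes are not controlled a priori) couple only against the feasible test point $(\bx,\by)$ after the rearrangement, so that their influence disappears as soon as we choose $\bx,\by$ feasible. Once this cancellation is made transparent, the telescoping in the $\|\cdot\|_H^2$ terms and the tuning of $(\lambda,\mu)$ are routine and parallel the template already used for Theorems \ref{theorem:complexity-ssg}--\ref{theorem:complexity}.
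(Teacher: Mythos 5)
Your proposal is correct and follows essentially the same route as the paper's proof: vanishing of the $H-G$ terms via $H=G$, block-wise expansion of $\langle F(\hat z^{k+1}),\hat z^{k+1}-z\rangle$ using convexity--concavity of $\Psi$, cancellation of the multiplier cross terms so that $\hat\lambda^{k+1},\hat\mu^{k+1}$ only multiply $A\bx-a$ and $B\by-b$ (which vanish for a feasible test point), telescoping, Jensen, and the tuning $\lambda=-\rho(A\bar\bx-a)/\|A\bar\bx-a\|$, $\mu=-\rho(B\bar\by-b)/\|B\bar\by-b\|$. The only cosmetic difference is that you carry the $\langle\hat\lambda^{k+1},A\bx-a\rangle$ terms through the averaging before discarding them, whereas the paper imposes feasibility of the test point first.
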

\begin{proof}
	Consider the LHS of \eqref{lm:EG}. By utilizing the detailed form of $F(z)$, we have 
	\begin{eqnarray}
		\label{thm:main-1}
		&&R(\hat z^{k+1}) - R(z) + \big\langle F(\hat z^{k+1}),\hat z^{k+1} - z\big\rangle\\
		& = & R(\hat z^{k+1}) - R(z) + \big\langle \nabla_\bx\Psi(\hat \bx^{k+1},\hat \by^{k+1}) - A^\top\hat{\lambda}^{k+1},\hat \bx^{k+1} - \bx\big\rangle  \nonumber\\
		&& + \big\langle A\hat\bx^{k+1}-a, \hat \lambda^{k+1}-\lambda\big\rangle + \big\langle B\hat\by^{k+1}-b, \hat \mu^{k+1}-\mu\big\rangle \nonumber\\
		&&+ \big\langle -\nabla_\by\Psi(\hat \bx^{k+1},\hat \by^{k+1}) - B^\top\hat{\mu}^{k+1},\hat \by^{k+1} - \by\big\rangle \nonumber\\
		&\overset{(i)}{\geq}& \Phi(\hat \bx^{k+1},\by) - \Phi(\bx,\hat \by^{k+1}) - \langle A^\top\hat{\lambda}^{k+1},\hat \bx^{k+1} - \bx\big\rangle  + \langle A\hat\bx^{k+1}-a, \hat \lambda^{k+1}-\lambda\rangle\nonumber\\
		&& - \langle B^\top\hat{\mu}^{k+1},\hat \by^{k+1} - \by\rangle  + \langle B\hat\by^{k+1}-b, \hat \mu^{k+1}-\mu\rangle\nonumber
	\end{eqnarray}
	where (i) is because  of the definition of $R$ and the convex-concave nature of $\Psi$:
	$$R(\hat z^{k+1}) - R(z) = \big(h(\hat\bx^{k+1}) - g(\by)\big) - \big(h(\bx) - g(\hat{\by}^{k+1})\big)$$
	$$\big\langle \nabla_\bx\Psi(\hat \bx^{k+1},\hat \by^{k+1}),\hat \bx^{k+1} - \bx\big\rangle \geq \Psi(\hat \bx^{k+1},\hat \by^{k+1}) - \Psi(\bx,\hat \by^{k+1})$$
	$$\big\langle -\nabla_\by\Psi(\hat \bx^{k+1},\hat \by^{k+1}),\hat \by^{k+1} - \by\big\rangle\geq \Psi(\hat \bx^{k+1},\by) - \Psi(\hat \bx^{k+1},\hat \by^{k+1})$$
	If we further require $(\bx,\by)\in\cX\times\cY$ to satisfy $A\bx = a$ and $B\by= b$, then we have
	\begin{eqnarray}
		&&\big\langle - A^\top\hat{\lambda}^{k+1},\hat \bx^{k+1} - \bx\big\rangle  + \big\langle A\hat\bx^{k+1}-a, \hat \lambda^{k+1}-\lambda\big\rangle\nonumber\\
		& = & \big\langle - \hat{\lambda}^{k+1},A\hat \bx^{k+1} - A\bx\big\rangle  + \big\langle A\hat\bx^{k+1}-a, \hat \lambda^{k+1}-\lambda\big\rangle\nonumber\\
		& = & -\big\langle A\hat\bx^{k+1}-a, \lambda\big\rangle\nonumber
	\end{eqnarray}
	and 
	\begin{eqnarray}
		&&\big\langle - B^\top\hat{\mu}^{k+1},\hat \by^{k+1} - \by\big\rangle  + \big\langle B\hat\by^{k+1}-b, \hat \mu^{k+1}-\mu\big\rangle\nonumber\\
		& = & \big\langle - \hat{\mu}^{k+1},B\hat \by^{k+1} - B\by\big\rangle  + \big\langle B\hat\by^{k+1}-b, \hat \mu^{k+1}-\mu\big\rangle\nonumber\\
		& = & - \big\langle B\hat\by^{k+1}-b, \mu\big\rangle\nonumber.
	\end{eqnarray}
	Combined with \eqref{thm:main-1} and Lemma \ref{lemma:EG}, we have 
	\begin{eqnarray}
		\label{thm:main-2}
		&&\Phi(\hat \bx^{k+1},\by) - \Phi(\bx,\hat \by^{k+1})-\big\langle A\hat\bx^{k+1}-a, \lambda\big\rangle- \big\langle B\hat\by^{k+1}-b, \mu\big\rangle\nonumber\\
		& \leq & R(\hat z^{k+1}) - R(z) + \big\langle F(\hat z^{k+1}),\hat z^{k+1} - z\big\rangle\\
		& \leq & \frac{1}{2}\|z^k - z\|^2_H  - \frac{1}{\eta}\|z - z^{k+1}\|^2_H\nonumber
	\end{eqnarray}
	where the $-\|\cdot\|_{H-G}^2$ terms in Lemma \ref{lemma:EG} vanish since $H=G$. Next, similar to the analysis of Theorem \ref{theorem:complexity-ssg}, we can average \eqref{thm:main-2} and use Jensen's inequality to yield:
	\begin{eqnarray} 
		\Phi(\bar \bx,\by) - \Phi(\bx,\bar \by)-\big\langle A\bar\bx-a, \lambda\big\rangle- \big\langle B\bar\by-b, \mu\big\rangle 
		\leq \frac{\|z-z^0\|^2_H}{2\cdot T}\nonumber. 
	\end{eqnarray}
	Since the above inequality holds for $\forall(\bx,\by)\in\cX\times\cY$ with $A\bx=a,B\by=b$, we can set 
	$\lambda = -\rho\cdot\frac{A\bar{\bx}-a}{\|A\bar{\bx}-a\|}$ and  $\mu = -\rho\cdot\frac{B\bar{\by}-b}{\|B\bar{\by}-b\|}$, also notice that $\mu^0 = 0, \lambda^0 = 0$, we have 
	\begin{align}
		\max_{\overset{\by\in\cY}{B\by=b}}\Phi(\bar \bx,\by) - \min_{\overset{\bx\in\cX}{A\bx=a}}\Phi(\bx,\bar \by)&+\rho\|A\bar\bx-a\| + \rho\|B\bar\by-b\|\nonumber\\
		&\leq\frac{L+\|A\|}{ 4T}\cdot D_\cX^2 + \frac{L+\|B\|}{ 4T}\cdot D_\cY^2 + \frac{\|A\|+\|B\|}{2T}\cdot \rho^2\nonumber,
	\end{align}
	which proves the theorem. 
\end{proof}
It can be noticed that although the duality gap w.r.t. problem \eqref{prob:main-Lag-form} is always infinity, the special structure of the original problem \eqref{prob:main} allows us to circumvent this issue and establish a convergence to the $\epsilon$-saddle point in sense of Definition \ref{definition:eps}, which is different from the traditional duality measure.

\subsection{Extension to conic inequality constrained problem}
The  EGMM algorithm can also easily adapt to the conic inequality constrained problems:
\begin{eqnarray} 
	\label{prob:conic}
	\min_{\mathbf{x}\in\cX}\max_{\by\in\cY}\Phi(\bx,\by):= h(\bx) + \Psi(\bx,\by) - g(\by)\quad\mbox{s.t.}\quad A\mathbf{x} \preceq_{K_1} a, \,\,\,B\by \preceq_{K_2} b,
\end{eqnarray} 
where $K_1,K_2$ are two closed convex cones. The notation $a\preceq_K b$ means that $b-a\in K$. Therefore, we can add two slack variables $x_{N+1}$ and $y_{M+1}$ and reformulate \eqref{prob:conic} as 
\begin{eqnarray*}  
	&&\mathop{\mathrm{minimize}}_{\mathbf{x}\in\cX,x_{N+1}\in\cX_{N+1}}\,\mathop{\mathrm{maximize}}_{\by\in\cY,y_{M+1}\in\cY_{M+1}}\Phi(\bx,\by):= h(\bx) + \Psi(\bx,\by) - g(\by)\\
	&&\mbox{s.t.}\qquad A\mathbf{x}  + x_{N+1} =  a, \,\,\,B\by + y_{M+1} = b, \nonumber\\
	&&\qquad\quad\,\,\cX_{N+1} := \{w\in\RR^n: w\in K_1, \|w\|\leq \|a\| + \|A\|D_\cX\}\nonumber\\
	&& \qquad\quad\,\,\cY_{M+1} := \{w\in\RR^m: w\in K_2, \|w\|\leq \|b\| + \|B\|D_\cY\}\nonumber
\end{eqnarray*} 
which is a special case of \eqref{prob:main}. Note that  EGMM works regardless of the number of blocks, we can again apply this algorithm to yield an $\cO(1/T)$ convergence rate.

\section{Primal-dual vs. approximate dual: a comparison between EGMM and ADMM}
\label{section:discussion}
Throughout the previous discussion, it is observed that the EGMM, SSG-ADMM and SEG-ADMM algorithms share the same advantage of subproblem separability, while EGMM has much better theoretical guarantees w.r.t. the general problem \eqref{prob:main} under various scenarios. In this section, we would like to discuss the insights of such advantage by inspecting EGMM and ADMM under the classical affinely constrained multi-block convex optimization problem \eqref{prob:admm}.

\subsection{EGMM \& ADMM for minimization}
Consider the special case of our main problem \eqref{prob:main} where $\Phi(\bx,\cdot)$ is a constant for $\forall \bx$. Then \eqref{prob:main} becomes the extensively studied multi-block convex optimization problem with affine constraint \eqref{prob:admm}, which can be written as 
\begin{equation*} 
	\min_{\bx\in\cX} \,\, \Phi(\bx) := h(\bx) + \Psi(\bx) \quad\mbox{s.t.}\quad A\bx = a.
\end{equation*}
The optimal solution of this problem is denoted as $\bx^*$. 
To formalize the discussion, we specialize the previous assumptions as follows. 

\begin{assumption} 
	\label{assumption:admm}
	For $i = 1,...,N$, $h_i(\cdot)$ is convex in $\cX_i$ and its proximal mapping can be efficiently evaluated. $\Psi(\cdot)$ is a smooth and convex in $\cX$, and $\nabla_\bx \Psi(\cdot)$ is $L_\bx$-Lipschitz continuous, i.e., $\|\nabla_\bx \Psi(\bx)-\nabla_\bx \Psi(\bx')\|\leq L_\bx\|\bx-\bx'\|$, $\forall x,x'\in\cX.$
\end{assumption}

\vspace{0.2cm}
\noindent\textbf{Multi-block ADMM algorithm.}$~$
For problem \eqref{prob:admm}, both SSG-ADMM and SEG-ADMM reduce to the proximal ADMM algorithm:
\begin{equation}
	\label{alg:ADMM}
	(\bx^{k+1},\lambda^{k+1}) = \texttt{Prox-ADMM}(\bx^k, \lambda^k; \by = {\bf null}; \gamma,\sigma).
\end{equation} 
For \eqref{alg:ADMM}, both the existing ADMM literature and Theorem \ref{theorem:complexity-ssg}, \ref{theorem:complexity} indicate the following result: Under Assumption \ref{assumption:admm} and $N = 2$. Set $\gamma>0$, $\sigma = L_\bx$. Then after $T$ iterations of \eqref{alg:ADMM}, we have for $\bar{\bx} = \frac{1}{T}\sum_{k = 1}^T\bx^k$ and $\forall\rho>0$ that
\begin{align*}
	\Phi(\bar {\bx}) - \Phi(\bx^*)  + \rho\|A\bar{\bx} - a\|
	\leq \cO\left(\frac{\rho^2/\gamma+ L_\bx D_{\cX}^2 + \gamma\|A_2\|^2 D_{\cX_2}^2}{2T} \right).
\end{align*}
When $N\geq3$, \eqref{alg:ADMM} diverges unless additional conditions or proper algorithm adaptation is made. E.g., making a strongly convex $\epsilon$-perturbation to problem \eqref{prob:admm}, see \eqref{prob:perturb}, yields an $\cO(1/\sqrt{T})$ convergence rate.

\vspace{0.2cm}
\noindent\textbf{EGMM algorithm.}$~$
For the optimization problem \eqref{prob:admm}, if we denote 
$$z = \begin{bmatrix}
	x\\ \lambda
\end{bmatrix}\!\!\in\!\cX\times\R^n, \quad F(z) = 
\begin{bmatrix}
	\nabla_\bx \Psi(\bx,\by) \!-\! A^\top\lambda\\
	A\bx-a
\end{bmatrix},\quad\mbox{and}\quad H = \begin{bmatrix}
	\frac{L_\bx + \|A\|}{2}  I_{d_\bx} & \\
	& \frac{\|A\|}{2} I_n 
\end{bmatrix} .$$
and $R(z):= h(\bx)$, then EGMM still takes the form of \eqref{algo:main-simplified}. As a corollary of Theorem \ref{theorem:EGMM}, it takes $T = \cO\Big(\frac{\|A\|\rho^2+(L_\bx + \|A\|) D_{\cX}^2}{\epsilon}\Big)$ iterations for EGMM to get an $\cO(\epsilon)$-optimal solution.

\subsection{Primal-dual method vs. approximate dual method}
To explain the restriction of the ADMM compared to EGMM, let us investigate the hidden logic behind these algorithms. 

\vspace{0.2cm}
\noindent\textbf{ADMM as an approximate dual method.} $~$ The traditional interpretation of ADMM type algorithms is to view them as an approximate gradient ascent of the dual function:
$$P_\gamma(\lambda) = \min_{\bx\in\cX} \,\,\cL_{\gamma}(\bx;\lambda):=h(\bx) + \Psi(\bx) -  \langle A\bx-a,\lambda\rangle + \frac{\gamma}{2}\|A\bx-a\|^2.$$
If we define $\bx^*(\lambda):=\argmin_{\bx\in\cX} \,\,\cL_{\gamma}(\bx;\lambda)$, then Danskin's theorem indicates that 
$$\nabla_{\!\lambda}P_\gamma(\lambda) = - (A\bx^*(\lambda)-a),$$
which is known to be $(1/\gamma)$-Lipschitz continuous in $\lambda$. Consider the special case where $\Psi(\bx)\equiv0$, the ADMM update in \eqref{alg:ADMM} with $\sigma = 0$ can be viewed as computing $\bx^{k+1}\approx \bx^*(\lambda^k)$ by one iteration of \emph{block coordinate minimization} in $\cL_{\gamma}(\bx;\lambda)$, and then update $\lambda^{k+1} = \lambda^k - \gamma\cdot(A\bx^{k+1}-a)\approx\lambda^k + \gamma\nabla_{\!\lambda}P_\gamma(\lambda^k)$. When $N=1$, $\bx^{k+1}=\bx^*(\lambda^k)$ holds exactly. When $N\geq3$, the approximation $\bx^{k+1}\approx \bx^*(\lambda^k)$ is very inaccurate and may cause divergence in hard instances. Therefore, instead of a primal-dual method, it is more appropriate to view ADMM as an \emph{approximate dual} method.

\vspace{0.2cm}
\noindent\textbf{EGMM as a primal-dual method.}$~$ In contrast to ADMM, EGMM proposed in this paper mainly focuses on the minimax formulation of problem \eqref{prob:admm}:
$$\min_{\bx\in\cX}\max_\lambda\,\, h(\bx) + \Psi(\bx) -  \langle A\bx-a,\lambda\rangle.$$
Note that EGMM can actually be viewed as a proximal gradient variant of the extragradient (EG) method, while adopting a positive definite gradient scaling, as well as a different convergence analysis. Therefore, EGMM experiences no issue of inexact dual gradient that has long troubled ADMM. Besides, the symmetricity of the primal-dual update makes it straightforward to be applied to the main problem \eqref{prob:main}. We can view EGMM as a fully \emph{primal-dual} method, while avoiding all convergence difficulties, it preserves the benefit of ADMM in solving small separable subproblems.

\section{Numerical Experiments on Team work RL} 
In this section, we consider the teamwork RL problem introduced in  \eqref{prob:mdp}. Given any partition of the state space $\cS = \cS_1\cup\cS_2\cup\cdots\cup\cS_n$, we consider the following general utility for this MDP: 
\begin{align*}
	&\rho_i(\mu(\cS_i,:)) \!:=\!\big\langle r(\cS_i,:),\mu(\cS_i,:)\big\rangle \!-\! \frac{\beta}{|\cS_i|}\!\sum_{s\in\cS_i}\!\!\left(\!\!\big\langle r(s,:),\mu(s,:)\big\rangle \!-\! \frac{\big\langle r(\cS_i,:),\mu(\cS_i,:)\big\rangle}{|\cS_i|}\!\right)^{\!\!2}.\\
	&\rho(\mu):= \mathop{\mathrm{min}}_{1\leq i\leq n}\,\,\Big\{\rho_i(\mu(\cS_i,:))\Big\}.
\end{align*}

In this utility, for any $s\in\cS,a\in\cA$, $r(s,a)$ stands for the reward that node $s$ receives if it takes the action $a$ when visited by the system. Suppose $\mu$ is a state action occupancy measure under some policy $\pi$. Then the first term of $\rho_i(\mu(\cS_i,:))$  equals the total discounted cumulative reward received by the cluster $\cS_i$. The second term of $\rho(\mu(\cS_i,:))$, if we ignore the $-\beta$ factor, equals the variance among the cumulative rewards received by the different nodes in the cluster $\cS_i$. That is, the agent in charge of the cluster $\cS_i$ would like to maximize the overall reward of the cluster while using a variance penalty to impose fairness among the member nodes. For the whole system, the common goal is to maximize the minimum utility among the $n$ clusters. To solve this team RL with general utility, we reformulate it as follows
\begin{eqnarray}
	\label{prob:mdp-re}
	&&\mathop{\mathrm{maximize}}_{{\bf0}_{{|\cS|\!\times\!|\cA|}}\leq\mu\leq\frac{{\bf1}_{{|\cS|\!\times|\!\cA|}}}{1-\gamma}}\,\,\,\mathop{\mathrm{minimize}}_{y\geq{\bf0}_n,{\bf1}_n^\top y=1}\,\,\,\, \Phi(\mu,y) := \sum_{i=1}^n y_i\cdot \rho_i(\mu(\cS_i,:)) \\
	&&\mathrm{s.t.}\quad \sum_{a\in\cA}\mu(s,a) = \gamma\sum_{s'\in\cS,a'\in\cA}\mu(s',a')P(s|s',a') + \xi(s),\,\,\,\forall s\in\cS,\nonumber
\end{eqnarray}
where the upper bound $\mu\leq\frac{{\bf1}_{{|\cS|\!\times|\!\cA|}}}{1-\gamma}$ is a redundant constraint satisfied by all state action occupancy measures. 

In the experiments, we test our algorithm in two networks illustrated by Figure \ref{fig:teamRL-1-1} and Figure \ref{fig:teamRL-2-1}. In particular, the network in Figure \ref{fig:teamRL-2-1} is generated by a stochastic block model, with 4 clusters of size $|\cS_1| = \cdots = |\cS_4| = 60$. For any two nodes from the same cluster, the probability of having a link between them is $p = 0.25$; for any two nodes from   different clusters, the probability of having a link between them is $q = 0.005$. Then a random adjacency matrix is generated accordingly. For both cases, we set the action space to have size $|\cA| = 3$. Once the action space $\cA$, the network structure and nodes clusters are determined, for each $(s,a)\in\cS\times\cA$, the transition probability $P(\cdot|s,a)$ is generated randomly among the neighbourhood of $s$ in the network, and the reward $r(s,a)$ is also randomly created. For all the experiment, we randomly generate the initial state distribution $\xi$ and we take the discount factor to be $\gamma=0.9$. With these generated $P,\gamma$ and $\xi$, we can rewrite the constraint in the form of $\sum_{i=1}^nA_i\mu_i = \xi$, with $\mu_i = \mu(\cS_i,:).$\vspace{-0.1cm}

\begin{figure}[H]
	\centering
	\begin{subfigure}{0.3\textwidth}
		\includegraphics[width=\textwidth]{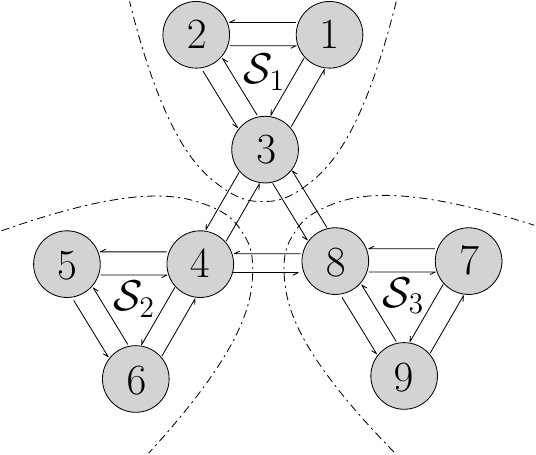}
		\caption{Network structure}
		\label{fig:teamRL-1-1}
	\end{subfigure}
	\hfill\hspace{-0.1cm}
	\begin{subfigure}{0.342\textwidth}
		\includegraphics[width=\textwidth]{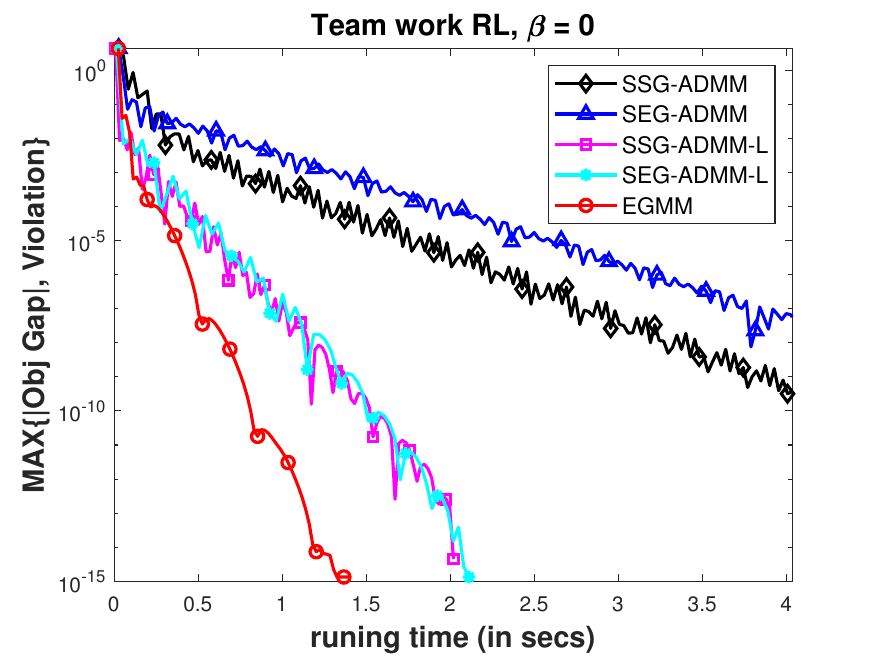}
		\caption{Case $\beta = 0$}
		\label{fig:teamRL-1-2}
	\end{subfigure}
	\hfill\hspace{-0.3cm}
	\begin{subfigure}{0.342\textwidth}
		\includegraphics[width=\textwidth]{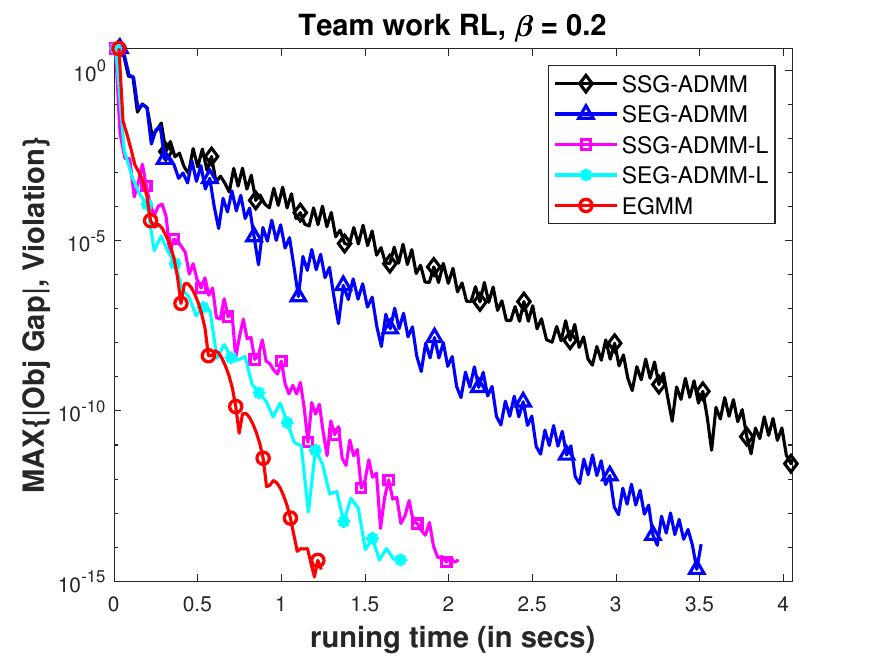}
		\caption{Case $\beta = 0.2$}
		\label{fig:teamRL-1-3}
	\end{subfigure} \vspace{-0.3cm}
	\caption{State space partition structure and experiments with $\beta = 0$ and $\beta = 0.2$.}
	\label{fig:TeamRL-1}
\end{figure} \vspace{-1cm}
\begin{figure}[H] 
	\centering
	\begin{subfigure}{0.28\textwidth}
		\includegraphics[width=\textwidth]{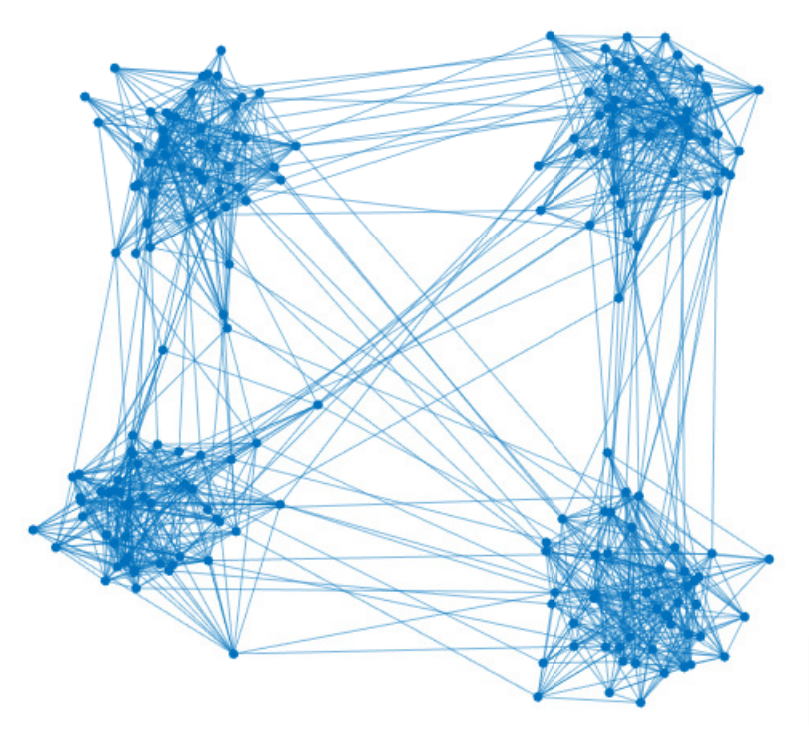}
		\caption{Network structure}
		\label{fig:teamRL-2-1}
	\end{subfigure}
	\hfill\hspace{-0.1cm}
	\begin{subfigure}{0.342\textwidth}
		\includegraphics[width=\textwidth]{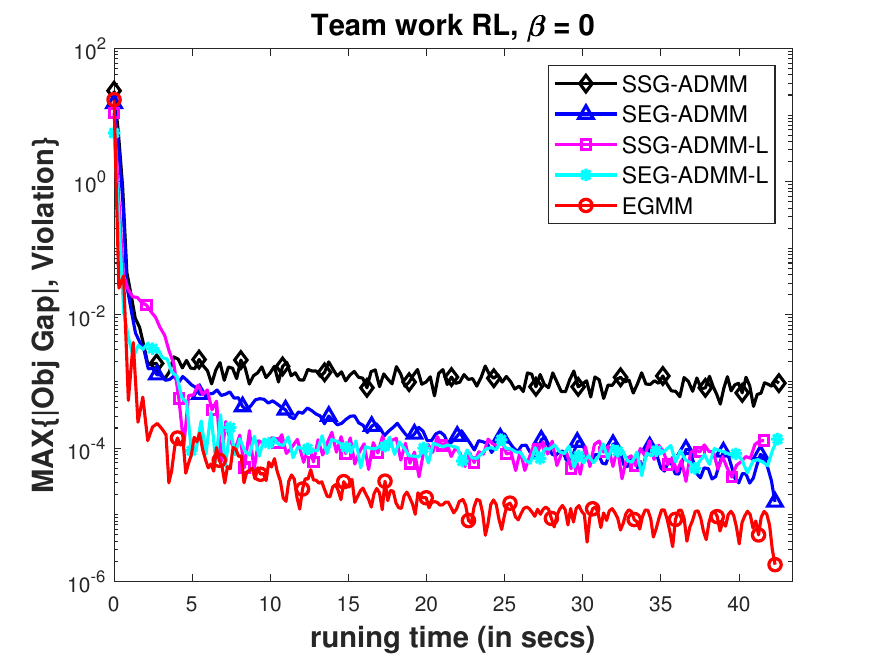}
		\caption{Case $\beta = 0$}
		\label{fig:teamRL-2-2}
	\end{subfigure}
	\hfill\hspace{-0.1cm}
	\begin{subfigure}{0.342\textwidth}
		\includegraphics[width=\textwidth]{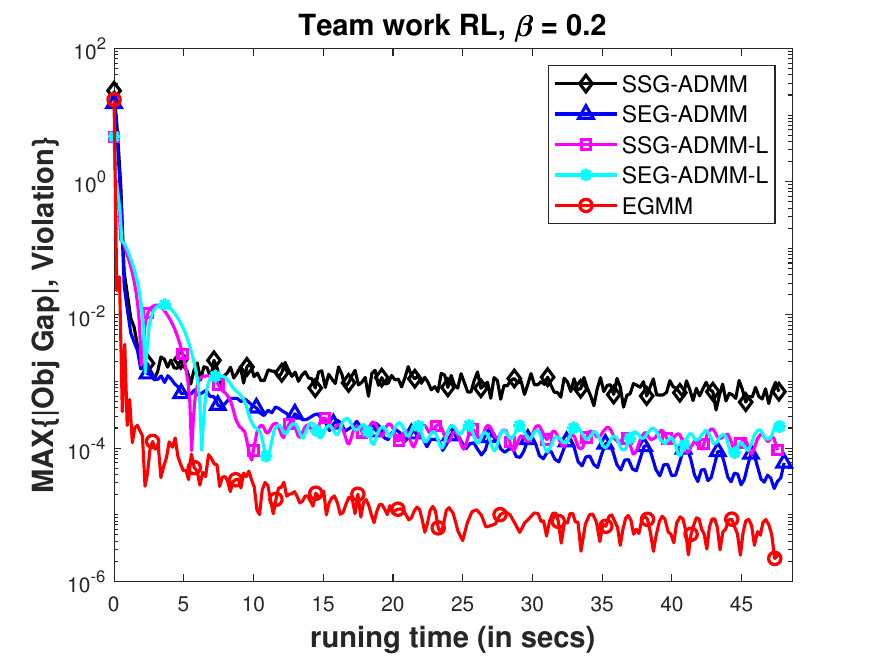}
		\caption{Case $\beta = 0.2$}
		\label{fig:teamRL-2-3}
	\end{subfigure}\vspace{-0.3cm}
	\caption{State space partition structure and experiments with $\beta = 0$ and $\beta = 0.2$.}
	\label{fig:TeamRL-2}
\end{figure}
\vspace{-0.4cm}
For this problem, we test all three proposed algorithms. In particular, for Algorithm \ref{alg:SSG-ADMM} and \ref{alg:SEG-ADMM}, if we simply  set the matrices $H_i = \sigma I$, then we call them SSG-ADMM and SEG-ADMM respectively. In this case, due to the box constraint, the subproblems does not have closed form solution and thus we  solve them with the standard Matlab \texttt{quadprog} function. Let $\gamma_0$ be the penalty coefficient in the augmented Lagrangian. If we set $H_i = \sigma I - \gamma_0 A_i^\top A_i$ to eliminate the quadratic term in the subproblem, we will call them SSG-ADMM-L and SEG-ADMM-L because choosing this specific proximal term is equivalent to linearizing the augmented quadratic penalty term. In this case, the subproblems have closed form solution. And we use EGMM to denote the curve of Algorithm \ref{alg:main}. For all the step size and penalty parameters, they are tuned from $\{1,10,100,1000\}$. For example, for SSG-ADMM, we select the penalty coefficient $\gamma_0$, and the step size $\sigma$ that have best performance from the above set.  For any iteration $(\bar\mu,\bar y)$, the reported error measure is chosen as \vspace{-0.2cm}
$$\max\Bigg\{\bigg|\mathop{\mathrm{maximize}}_{{\bf0}_{{|\cS|\!\times\!|\cA|}}\leq\mu\leq\frac{{\bf1}_{{|\cS|\!\times|\!\cA|}}}{1-\gamma}}\Phi(\mu,\bar y) - \mathop{\mathrm{minimize}}_{\by\geq{\bf0}_n,{\bf1}_n^\top\by=1}\Phi(\bar\mu,y)\bigg|, \bigg\|\sum_{i=1}^nA_i\bar\mu(\cS_i,:)-\xi\bigg\|\Bigg\}.\vspace{-0.2cm} $$
We report the results in Figure \ref{fig:TeamRL-1} and \ref{fig:TeamRL-2}.\vspace{0.6cm}\\
\noindent\textbf{Acknowledgment.} J. Zhang is supported by the Ministry of Education (MOE), Singapore, project WBS No. R-266-000-158-133. M. Wang is supported by NSF grants DMS-1953686, IIS-2107304, CMMI-1653435, and ONR grant 1006977. M. Hong is supported in part by NSF grants  CIF-1910385 and CMMI-1727757.

\bibliographystyle{siamplain}
\bibliography{admm}

\begin{thebibliography}{10}

\bibitem{abernethy2019last}
{\sc J.~Abernethy, K.~A. Lai, and A.~Wibisono}, {\em Last-iterate convergence
  rates for min-max optimization}, arXiv preprint arXiv:1906.02027,  (2019).

\bibitem{bertsekas1997nonlinear}
{\sc D.~P. Bertsekas}, {\em Nonlinear programming}, Journal of the Operational
  Research Society, 48 (1997), pp.~334--334.

\bibitem{boyd2011distributed}
{\sc S.~Boyd, N.~Parikh, and E.~Chu}, {\em Distributed optimization and
  statistical learning via the alternating direction method of multipliers},
  Now Publishers Inc, 2011.

\bibitem{cai2017convergence}
{\sc X.~Cai, D.~Han, and X.~Yuan}, {\em On the convergence of the direct
  extension of admm for three-block separable convex minimization models with
  one strongly convex function}, Computational Optimization and Applications,
  66 (2017), pp.~39--73.

\bibitem{censor2011subgradient}
{\sc Y.~Censor, A.~Gibali, and S.~Reich}, {\em The subgradient extragradient
  method for solving variational inequalities in hilbert space}, Journal of
  Optimization Theory and Applications, 148 (2011), pp.~318--335.

\bibitem{chambolle2011first}
{\sc A.~Chambolle and T.~Pock}, {\em A first-order primal-dual algorithm for
  convex problems with applications to imaging}, Journal of Mathematical
  Imaging and Vision, 40 (2011), pp.~120--145.

\bibitem{chambolle2016ergodic}
{\sc A.~Chambolle and T.~Pock}, {\em On the ergodic convergence rates of a
  first-order primal--dual algorithm}, Mathematical Programming, 159 (2016),
  pp.~253--287.

\bibitem{chen2016direct}
{\sc C.~Chen, B.~He, Y.~Ye, and X.~Yuan}, {\em The direct extension of admm for
  multi-block convex minimization problems is not necessarily convergent},
  Mathematical Programming, 155 (2016), pp.~57--79.

\bibitem{chen2013convergence}
{\sc C.~Chen, Y.~Shen, and Y.~You}, {\em On the convergence analysis of the
  alternating direction method of multipliers with three blocks}, in Abstract
  and Applied Analysis, vol.~2013, Hindawi, 2013.

\bibitem{chen2014optimal}
{\sc Y.~Chen, G.~Lan, and Y.~Ouyang}, {\em Optimal primal-dual methods for a
  class of saddle point problems}, SIAM Journal on Optimization, 24 (2014),
  pp.~1779--1814.

\bibitem{chen2018scalable}
{\sc Y.~Chen, L.~Li, and M.~Wang}, {\em Scalable bilinear pi learning using
  state and action features}, arXiv preprint arXiv:1804.10328,  (2018).

\bibitem{chen2018convexified}
{\sc Y.~Chen, X.~Li, J.~Xu, et~al.}, {\em Convexified modularity maximization
  for degree-corrected stochastic block models}, The Annals of Statistics, 46
  (2018), pp.~1573--1602.

\bibitem{dai2018sbeed}
{\sc B.~Dai, A.~Shaw, L.~Li, L.~Xiao, N.~He, Z.~Liu, J.~Chen, and L.~Song},
  {\em Sbeed: Convergent reinforcement learning with nonlinear function
  approximation}, in International Conference on Machine Learning, PMLR, 2018,
  pp.~1125--1134.

\bibitem{deng2016global}
{\sc W.~Deng and W.~Yin}, {\em On the global and linear convergence of the
  generalized alternating direction method of multipliers}, Journal of
  Scientific Computing, 66 (2016), pp.~889--916.

\bibitem{gao2019randomized}
{\sc X.~Gao, Y.-Y. Xu, and S.-Z. Zhang}, {\em Randomized primal--dual proximal
  block coordinate updates}, Journal of the Operations Research Society of
  China, 7 (2019), pp.~205--250.

\bibitem{he20121}
{\sc B.~He and X.~Yuan}, {\em On the o(1/n) convergence rate of the
  douglas--rachford alternating direction method}, SIAM Journal on Numerical
  Analysis, 50 (2012), pp.~700--709.

\bibitem{hong2017linear}
{\sc M.~Hong and Z.-Q. Luo}, {\em On the linear convergence of the alternating
  direction method of multipliers}, Mathematical Programming, 162 (2017),
  pp.~165--199.

\bibitem{jalilzadeh2019doubly}
{\sc A.~Jalilzadeh, E.~Y. Hamedani, and N.~S. Aybat}, {\em A doubly-randomized
  block-coordinate primal-dual method for large-scale saddle point problems},
  arXiv preprint arXiv:1907.03886,  (2019).

\bibitem{juditsky2011solving}
{\sc A.~Juditsky, A.~Nemirovski, and C.~Tauvel}, {\em Solving variational
  inequalities with stochastic mirror-prox algorithm}, Stochastic Systems, 1
  (2011), pp.~17--58.

\bibitem{korpelevichextrapolation}
{\sc G.~M. Korpelevich}, {\em Extrapolation gradient methods and relation to
  modified lagrangeans. ekonomika i matematicheskie metody, 19: 694--703,
  1983}, Russian; English translation in Matekon.

\bibitem{li2015convergent}
{\sc M.~Li, D.~Sun, and K.-C. Toh}, {\em A convergent 3-block semi-proximal
  admm for convex minimization problems with one strongly convex block},
  Asia-Pacific Journal of Operational Research, 32 (2015), p.~1550024.

\bibitem{liang2019interaction}
{\sc T.~Liang and J.~Stokes}, {\em Interaction matters: A note on
  non-asymptotic local convergence of generative adversarial networks}, in The
  22nd International Conference on Artificial Intelligence and Statistics,
  2019, pp.~907--915.

\bibitem{lin2013design}
{\sc F.~Lin, M.~Fardad, and M.~R. Jovanovi{\'c}}, {\em Design of optimal sparse
  feedback gains via the alternating direction method of multipliers}, IEEE
  Transactions on Automatic Control, 58 (2013), pp.~2426--2431.

\bibitem{lin2020near}
{\sc T.~Lin, C.~Jin, M.~Jordan, et~al.}, {\em Near-optimal algorithms for
  minimax optimization}, arXiv preprint arXiv:2002.02417,  (2020).

\bibitem{lin2015global}
{\sc T.~Lin, S.~Ma, and S.~Zhang}, {\em On the global linear convergence of the
  admm with multiblock variables}, SIAM Journal on Optimization, 25 (2015),
  pp.~1478--1497.

\bibitem{lin2015sublinear}
{\sc T.~Lin, S.~Ma, and S.~Zhang}, {\em On the sublinear convergence rate of
  multi-block admm}, Journal of the Operations Research Society of China, 3
  (2015), pp.~251--274.

\bibitem{lin2016iteration}
{\sc T.~Lin, S.~Ma, and S.~Zhang}, {\em Iteration complexity analysis of
  multi-block admm for a family of convex minimization without strong
  convexity}, Journal of Scientific Computing, 69 (2016), pp.~52--81.

\bibitem{ma2018efficient}
{\sc S.~Ma and N.~S. Aybat}, {\em Efficient optimization algorithms for robust
  principal component analysis and its variants}, Proceedings of the IEEE, 106
  (2018), pp.~1411--1426.

\bibitem{mokhtari2020unified}
{\sc A.~Mokhtari, A.~Ozdaglar, and S.~Pattathil}, {\em A unified analysis of
  extra-gradient and optimistic gradient methods for saddle point problems:
  Proximal point approach}, in International Conference on Artificial
  Intelligence and Statistics, PMLR, 2020, pp.~1497--1507.

\bibitem{monteiro2013iteration}
{\sc R.~D. Monteiro and B.~F. Svaiter}, {\em Iteration-complexity of
  block-decomposition algorithms and the alternating direction method of
  multipliers}, SIAM Journal on Optimization, 23 (2013), pp.~475--507.

\bibitem{nemirovski2004prox}
{\sc A.~Nemirovski}, {\em Prox-method with rate of convergence o (1/t) for
  variational inequalities with lipschitz continuous monotone operators and
  smooth convex-concave saddle point problems}, SIAM Journal on Optimization,
  15 (2004), pp.~229--251.

\bibitem{nesterov2007dual}
{\sc Y.~Nesterov}, {\em Dual extrapolation and its applications to solving
  variational inequalities and related problems}, Mathematical Programming, 109
  (2007), pp.~319--344.

\bibitem{nesterov2006solving}
{\sc Y.~Nesterov and L.~Scrimali}, {\em Solving strongly monotone variational
  and quasi-variational inequalities}, Available at SSRN 970903,  (2006).

\bibitem{nisan2007algorithmic}
{\sc N.~Nisan, T.~Roughgarden, E.~Tardos, and V.~V. Vazirani}, {\em Algorithmic
  {G}ame {T}heory}, Cambridge University Press, 2007.

\bibitem{ouyang2019lower}
{\sc Y.~Ouyang and Y.~Xu}, {\em Lower complexity bounds of first-order methods
  for convex-concave bilinear saddle-point problems}, Mathematical Programming,
   (2019), pp.~1--35.

\bibitem{von2007theory}
{\sc J.~von Neumann, O.~Morgenstern, and H.~W. Kuhn}, {\em Theory of {G}ames
  and {E}conomic {B}ehavior (commemorative edition)}, Princeton University
  Press, 2007.

\bibitem{wang2020improved}
{\sc Y.~Wang and J.~Li}, {\em Improved algorithms for convex-concave minimax
  optimization}, arXiv preprint arXiv:2006.06359,  (2020).

\bibitem{xiao2019dscovr}
{\sc L.~Xiao, A.~W. Yu, Q.~Lin, and W.~Chen}, {\em Dscovr: Randomized
  primal-dual block coordinate algorithms for asynchronous distributed
  optimization}, The Journal of Machine Learning Research, 20 (2019),
  pp.~1634--1691.

\bibitem{zhang2019lower}
{\sc J.~Zhang, M.~Hong, and S.~Zhang}, {\em On lower iteration complexity
  bounds for the saddle point problems}, arXiv preprint arXiv:1912.07481,
  (2019).

\bibitem{zhang2020variational}
{\sc J.~Zhang, A.~Koppel, A.~S. Bedi, C.~Szepesvari, and M.~Wang}, {\em
  Variational policy gradient method for reinforcement learning with general
  utilities}, arXiv preprint arXiv:2007.02151,  (2020).

\end{thebibliography}

\appendix

\section{Proof of Lemma \ref{assumption:bounded_rho} }
\label{appdx:Slater}
\begin{proof}
	First, consider the optimization problem that defines $p_\by(0)$: 
	$$p_\by(0) \,\,:=\,\, \min_{\bx\in\cX} \,\,\Phi(\bx,\by)\quad\mathrm{s.t.}\quad A\bx - a = 0.$$ Due to the compactness of the non-empty feasible region, as well as the lower semi-continuity of $\Phi(\cdot,\by)$, there exists a minimizer $\bx^*(\by)$ for this problem. Due to the convexity of $\Phi(\cdot,\by)$ and Slater's condition, there is an optimal Lagrangian multiplier $\omega^*_\by\in\mathrm{Span}(A)$ associated with the linear constraint $A\bx = a$ and the strong duality holds. Then the classical Lagrangian multiplier theory and sensitivity analysis for convex optimization tells us that\vspace{-0.0cm} 
	\begin{eqnarray}
		\label{eqn:slater-1}
		\qquad\bx^*(\by) = \argmin_{\bx\in\cX}  \Phi(\bx,\by) - (A\bx-a)^\top\omega_\by^*,\quad A\bx^*(\by)=a,\quad\mbox{and}\quad \omega_\by^*\in\partial p_{\by}(0)\vspace{-0.0cm} 
	\end{eqnarray} 
	Since we assume  $\Phi(\cdot,\cdot)$ is bounded over $\cX\times\cY$,  there exist $\underline{\Phi}$ and $\overline{\Phi}$ such that $$\underline{\Phi} \leq \Phi(\bx,\by) \leq \overline{\Phi},\qquad\mbox{for}\qquad \forall \bx\in\cX, \forall\by\in\cY.$$
	Because $\omega_\by^*\in\mathrm{Span}(A)$, then exists $c$ s.t. $Ac = \omega_\by^*$. In particular, if we pick a $c$ as the minimum norm solution to this linear equation, then we know $\|c\|\leq\frac{\|\omega_\by^*\|}{\sigma_{\min}(A)}$, where $\sigma_{\min}(A)$ is the minimum non-zero singular value of $A$.  Overall, there exists $c$ s.t. \vspace{-0.0cm} 
	\begin{eqnarray}
		\label{eqn:slater-2}
		Ac = \omega_\by^*\qquad\mbox{with}\qquad \|c\|\leq \|\omega_\by^*\|/\sigma_{\min}(A).\vspace{-0.0cm} 
	\end{eqnarray}   
	By Assumption \ref{assumption:slater} (Slater's condition), there $\exists\hat \bx\in\mathrm{int}(\cX)$ s.t. $A\hat{\bx} = a$. Consequently, there $\exists\delta_{\hat \bx}>0$ such that $B(\hat{\bx},\delta_{\hat \bx})\subseteq \cX$.  Combined with \eqref{eqn:slater-2}, we have \vspace{-0.05cm} 
	$$\tilde{\bx}:=\hat{\bx} + \frac{\delta_{\hat\bx}\cdot\sigma_{\min}(A)}{\|\omega_\by^*\|}\cdot c\in B(\hat{\bx},\delta_{\hat\bx})\subseteq \cX.\vspace{-0.05cm} $$ Finally, by \eqref{eqn:slater-1}, we have 
	\begin{eqnarray*}
		\Phi(\bx^*(\by),\by) & = & \min_{\bx\in\cX}  \Phi(\bx,\by) - (A\bx-a)^\top\omega_\by^*\\
		& \leq & \Phi(\tilde\bx,\by) - \left(A\left(\hat{\bx}+\frac{\delta_{\hat\bx}\cdot\sigma_{\min}(A)}{\|\omega_\by^*\|}\cdot c\right)-a\right)^\top\omega_\by^*\\
		& = & \Phi(\tilde\bx,\by) - \delta_{\hat\bx}\cdot\sigma_{\min}(A)\cdot\|\omega_\by^*\|.
	\end{eqnarray*} 
	That is $\|\omega_\by^*\|\leq \frac{\overline{\Phi}-\underline{\Phi}}{\delta_{\hat\bx}\cdot\sigma_{\min}(A)}$ for any $\by\in\cY$. This also indicates that 
	$$\sup_{\by\in\cY} \inf_{\omega_1\in\partial p_\by(0)} \|\omega_1\| \leq \sup_{\by\in\cY} \|\omega_\by^*\|\leq \frac{\overline{\Phi}-\underline{\Phi}}{\delta_{\hat\bx}\cdot\sigma_{\min}(A)}.$$
	Through a completely symmetric analysis, there exists an upper bound 
	$$\sup_{\bx\in\cX} \inf_{\omega_2\in\partial q_\bx(0)} \|\omega_2\| \leq  \frac{\overline{\Phi}-\underline{\Phi}}{\delta_{\hat\by}\cdot\sigma_{\min}(B)},$$ where $\delta_{\hat\by}>0$ is a constant s.t. $B(\hat{\by},\delta_{\hat\by})\subset\cY$, which proves the existence of a finite positive constant $\rho^*$.
\end{proof}

\section{Convergence of multi-block ($N\geq3$)  SEG-ADMM}
\label{appdx:Ngeq3}
Similar to convex optimization, the ADMM-based methods {SSG-ADMM}  and  {SEG-ADMM} in general diverge when $N\geq3$.
In this Appendix, we will consider a partial strong convexity condition \cite{chen2013convergence,lin2015sublinear,lin2015global,li2015convergent,cai2017convergence} for problem \eqref{prob:1-sided}, under which an $\cO(1/T)$ convergence can be derived for {SEG-ADMM}. A perturbation strategy from \cite{lin2016iteration} can be adopted in case this condition does not hold. 

\begin{assumption}
	\label{assumption:strongly-convex}
	$h_i(\cdot,\by)$ is $\mu_i$-strongly convex in $\cX_i$, for $i = 2,...,N$, $\forall \by\in\cY$. 
\end{assumption}
Specifically,  $h_1(\cdot,\by)$ is only required to be convex, instead of strongly convex. Note that Lemma \ref{lemma:SEG-step} is still valid, and we only need to extend Lemma \ref{lemma:2-ADMM-step} as follows. The analysis mostly comes from the proof of \cite{lin2015sublinear}, while we apply the linearization technique from \cite{gao2019randomized} to handle the smooth coupling term $\Psi(\bx,\by)$ in addition. The analysis is very similar to that of Lemma \ref{lemma:2-ADMM-step}, thus we omit the proof of this lemma. 
\begin{lemma}
	\label{lemma:N-ADMM-step}
	Suppose Assumptions  \ref{assumption:Convexity-1sided} and \ref{assumption:strongly-convex} hold. For some fixed $\tilde{\by}\in\cY$ and $H_i\succ0, i=1,...,N$, suppose 
	$(\bx^{k+1},\lambda^{k+1}) = \emph{\texttt{Prox-ADMM}}(\bx^k,\lambda^k;\tilde \by;\gamma,\{H_i\}_{i=1}^N)$. Denote $H = \mathrm{Diag}(H_1,\cdots,H_N)$. For $\forall\mathbf{x}\in\cX$ s.t. $A\bx = a$, we have 
	\begin{eqnarray*} 
		& & \Phi(\bx^{k+1}, \tby) - \Phi(\bx,\tby) - \langle\lambda, A\bx^{k+1}-a\rangle \nonumber\\
		& \leq &  \frac{1}{2\gamma}\left(\|\lambda-\lambda^k\|^2-\|\lambda-\lambda^{k+1}\|^2\right) + \frac{1}{2}\left(\|\bx - \bx^k\|^2_H - \|\bx - \bx^{k+1}\|^2_H\right) \\
		& &  +\frac{\gamma}{2}\!\sum_{j=2}^{N}\!\left(\!\left\|\cA\!\left(\bx_{1:j-1},\bx_{j:N}^k\right)\!-\!a\right\|^2 \!-\! \big\|\cA\big(\bx_{1:j-1},\bx_{j:N}^{k+1}\big)\!-\!a\big\|^2\right) \!-\! \frac{1}{2}\|\bx^{k}\!-\!\bx^{k+1}\|^2_{H-L_\bx I}\nonumber\\
		&& -\frac{1}{2}\sum_{i=2}^N(\mu_i - \gamma N(N-1)\|A_i\|^2)\|x_i-x_i^{k+1}\|^2.
	\end{eqnarray*}	 
\end{lemma}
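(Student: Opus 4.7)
The plan is to mirror the proof of Lemma~\ref{lemma:2-ADMM-step} but extend it to $N$ blocks, using Assumption~\ref{assumption:strongly-convex} to absorb the multi-block cross terms that inevitably arise. The backbone is: write the first-order optimality condition of each block update, combine them with strong/plain convexity of the $h_i$'s and the descent lemma for $\Psi$, then reshape the augmented-Lagrangian cross products into the stated telescoping form plus residuals that are controlled by the strong-convexity modulus.

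First, I would expose the $i$-th block's KKT condition. Define the intermediate multiplier
\[
\hat\lambda^{k+1,i} \,:=\, \lambda^k - \gamma\bigl(A_1 x_1^{k+1}+\cdots+A_i x_i^{k+1}+A_{i+1}x_{i+1}^k+\cdots+A_N x_N^k - a\bigr),
\]
so that the subproblem for $x_i^{k+1}$ in \texttt{Prox-ADMM} furnishes $h_i'(x_i^{k+1},\tby)\in\partial_{x_i} h_i(x_i^{k+1},\tby)$ with
\[
\langle h_i'(x_i^{k+1},\tby)+\nabla_{x_i}\Psi(\bx^k,\tby)-A_i^\top\hat\lambda^{k+1,i}+\sigma(x_i^{k+1}-x_i^k),\,x_i-x_i^{k+1}\rangle\,\geq\,0,\quad\forall x_i\in\cX_i.
\]
Combined with $\mu_i$-strong convexity of $h_i(\cdot,\tby)$ (with $\mu_1=0$) and summed over $i$, this yields an upper bound on $h(\bx^{k+1},\tby)-h(\bx,\tby)+\sum_i\tfrac{\mu_i}{2}\|x_i-x_i^{k+1}\|^2$ in terms of three pieces: a $\Psi$-gradient piece, a proximal piece, and a multiplier piece. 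The $\Psi$-gradient piece $\langle-\nabla_\bx\Psi(\bx^k,\tby),\bx^{k+1}-\bx\rangle$ is converted by the convex descent lemma into $\Psi(\bx,\tby)-\Psi(\bx^{k+1},\tby)+\tfrac{L_\bx}{2}\|\bx^{k+1}-\bx^k\|^2$; the proximal piece is split by the three-point identity into $\tfrac{\sigma}{2}(\|\bx-\bx^k\|^2-\|\bx-\bx^{k+1}\|^2-\|\bx^k-\bx^{k+1}\|^2)$. Combined, these already explain the $-\tfrac{\sigma-L_\bx}{2}\|\bx^k-\bx^{k+1}\|^2$ term and the $\sigma$-telescoping.

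The main obstacle is the multiplier piece $S:=\sum_i\langle\hat\lambda^{k+1,i},A_i(x_i^{k+1}-x_i)\rangle$. Splitting $\hat\lambda^{k+1,i}=\lambda+(\hat\lambda^{k+1,i}-\lambda)$ and using $A\bx=a$ gives the $\langle\lambda,A\bx^{k+1}-a\rangle$ contribution (which matches the LHS of the stated inequality). The remainder, of the form $-\gamma\sum_i\langle\cA(\bx_{1:i},\bx_{i+1:N}^k)-a,\,A_i(x_i^{k+1}-x_i)\rangle$ up to a $\lambda^k-\lambda$ shift that telescopes into $\tfrac{1}{2\gamma}(\|\lambda-\lambda^k\|^2-\|\lambda-\lambda^{k+1}\|^2)$ via $\lambda^{k+1}=\lambda^k-\gamma(A\bx^{k+1}-a)$, must be recast. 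Applying the identity $A_i(x_i^{k+1}-x_i)=\cA(\bx_{1:i-1},\bx_{i:N}^{k+1})-\cA(\bx_{1:i},\bx_{i+1:N}^{k+1})$ and the elementary $2\langle u,v\rangle=\|u\|^2+\|v\|^2-\|u-v\|^2$ turns the diagonal part of $S$ into the telescoping sum $\tfrac{\gamma}{2}\sum_{j=2}^N\bigl(\|\cA(\bx_{1:j-1},\bx_{j:N}^k)-a\|^2-\|\cA(\bx_{1:j-1},\bx_{j:N}^{k+1})-a\|^2\bigr)$.

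The leftover off-diagonal cross terms are of the form $\gamma\langle A_\ell(x_\ell^{k+1}-x_\ell^k),A_i(x_i^{k+1}-x_i)\rangle$ for $i\neq\ell$, arising because only the first $i$ blocks are updated when defining $\hat\lambda^{k+1,i}$. There are at most $N(N-1)$ such pairs, each bounded by Cauchy--Schwarz and Young's inequality as $\tfrac{\gamma}{2}\|A_i\|^2\|x_i^{k+1}-x_i\|^2+\tfrac{\gamma}{2}\|A_\ell\|^2\|x_\ell^{k+1}-x_\ell^k\|^2$, so after summing they contribute at most $\tfrac{\gamma N(N-1)}{2}\sum_i\|A_i\|^2\|x_i-x_i^{k+1}\|^2$ (absorbed into the $\mu_i$-terms, producing the stated $-\tfrac12(\mu_i-\gamma N(N-1)\|A_i\|^2)$ coefficients) plus $\|A_\ell\|^2$ multiples of $\|x_\ell^{k+1}-x_\ell^k\|^2$ that are absorbed into the $\sigma$-term, provided $\sigma$ is chosen sufficiently large, as in \cite{lin2015sublinear}. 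Collecting everything reproduces the displayed inequality; the role of strong convexity is precisely to keep the residual $\mu_i-\gamma N(N-1)\|A_i\|^2$ non-negative for suitable $\gamma$, which is exactly the mechanism that rescues multi-block convergence.
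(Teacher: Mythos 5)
The paper does not actually print a proof of this lemma (it is stated to be ``very similar'' to Lemma~\ref{lemma:2-ADMM-step}, following \cite{lin2015sublinear} with the linearization of \cite{gao2019randomized}), and your overall architecture --- per-block KKT inequalities with the intermediate multipliers $\hat\lambda^{k+1,i}$, strong convexity of $h_i(\cdot,\tby)$ for $i\geq 2$, the descent lemma for $\Psi$, and three-point identities for the proximal and multiplier pieces --- is exactly the intended extension. The first half of your sketch (everything up to and including the $-\tfrac{\sigma-L_\bx}{2}\|\bx^k-\bx^{k+1}\|^2$ and $\tfrac{1}{2\gamma}$-telescoping terms) is correct.

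The gap is in the cross-term bookkeeping, and it is not cosmetic. First, your symmetric Cauchy--Schwarz/Young split of each pair $\gamma\langle A_\ell(x_\ell^{k+1}-x_\ell^k),A_i(x_i^{k+1}-x_i)\rangle$ leaves residuals $\tfrac{\gamma}{2}\|A_\ell\|^2\|x_\ell^{k+1}-x_\ell^k\|^2$ that you propose to absorb into the proximal term ``provided $\sigma$ is chosen sufficiently large.'' But the lemma is asserted for any $\sigma\geq 0$ and its right-hand side contains exactly $-\tfrac{\sigma-L_\bx}{2}\|\bx^k-\bx^{k+1}\|^2$, with \emph{all} of the $\gamma$-dependent damage confined to the coefficients $\mu_i-\gamma N(N-1)\|A_i\|^2$; moreover Theorem~\ref{theorem:ADMM-SC} sets $\sigma_\bx=L_\bx+L_\by$, where the entire $L_\by$ of slack is already consumed cancelling the $+\tfrac{L_\by}{2}\|\bx^k-\bx^{k+1}\|^2$ term from Lemma~\ref{lemma:SEG-step}, so there is no room left to hide $\gamma\|A_\ell\|^2$-type terms there. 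What you have sketched therefore proves a genuinely weaker inequality than the one stated. Second, your split also deposits a penalty $\tfrac{\gamma}{2}\|A_1\|^2\|x_1-x_1^{k+1}\|^2$ on block $1$, which has no strong convexity to pay for it (the stated penalty sum starts at $i=2$), and the symmetric $\tfrac12$--$\tfrac12$ split yields at most a factor $N-1$ per block rather than the stated $N(N-1)$, so the constants do not come out of your accounting either. To land on the stated bound you must instead mimic the exact-identity route of the $N=2$ proof: split $x_\ell^{k}-x_\ell^{k+1}=(x_\ell^{k}-x_\ell)+(x_\ell-x_\ell^{k+1})$, symmetrize the second piece into $\tfrac12\|A\bx^{k+1}-a\|^2-\tfrac12\sum_i\|A_i(x_i-x_i^{k+1})\|^2$ (the former cancels against $-\tfrac{1}{2\gamma}\|\lambda^k-\lambda^{k+1}\|^2$), and convert the first piece into the telescoping differences of $\|\cA(\bx_{1:j-1},\bx_{j:N}^{k})-a\|^2$ and $\|\cA(\bx_{1:j-1},\bx_{j:N}^{k+1})-a\|^2$, so that the only surviving Young penalties sit on $\|x_i-x_i^{k+1}\|^2$ for $i\geq 2$. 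As written, your proposal does not close this step.
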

By Lemma \ref{lemma:SEG-step} and setting $\tby  = \hat \by^{k+1}$ in Lemma \ref{lemma:N-ADMM-step}, we have the following theorem whose proof is omitted.
\begin{theorem}
	\label{theorem:ADMM-SC}
	Suppose Assumptions \ref{assumption:Convexity-1sided}, \ref{assumption:strongly-convex} and \ref{assumption:Lipschitz-1sided} hold, and $N\geq3$. Let
	$(\bar{\bx} ,\bar \by)$ be the output of  Algorithm \ref{alg:SEG-ADMM} after $T$ iterations. As long as we choose $H_i \succeq (L_\bx + L_\by)I$, $i=1,\cdots,N$, $G \succeq L_\by I$, and $\gamma =  \frac{1}{N(N-1)}\min\Big\{\frac{\mu_2}{\|A_2\|^2},...,\frac{\mu_N}{\|A_N\|^2}\Big\}$, it holds for $\forall\rho>0$ that
	\begin{align*}
		&\quad\max_{\by\in \cY} \Phi(\bar {\bx},\by) - \min_{\bx\in\cX,A\bx = a}\Phi(\bx,\bar \by)  + \rho\|A\bar{\bx} - a\|\\
		& \!\leq\! \cO\left(\frac{N^2\rho^2/\gamma + \|G\|\cdot D_\cY^2 + \|H\|\cdot D_\cX^2 + \gamma\sum_{j=2}^N(2N+j)(j-1)\|A_j\|^2D_{\cX_j}^2}{T}\right)
	\end{align*}
	By Lemma \ref{lemma:opt-cond}, it takes $T=\cO(\epsilon^{-1})$ iterations to reach an $\epsilon$-saddle point. 
\end{theorem}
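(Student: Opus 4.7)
The proof will follow the same template as Theorem \ref{theorem:complexity} but using the strengthened Lemma \ref{lemma:N-ADMM-step} in place of Lemma \ref{lemma:2-ADMM-step}, with the partial strong convexity terms absorbing the cross-block coupling error. The plan is to add Lemma \ref{lemma:N-ADMM-step} (with $\tilde\by = \hat\by^{k+1}$, $\sigma = \sigma_\bx$) to Lemma \ref{lemma:SEG-step} for each $k$, and then to telescope.

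First, I would substitute $\sigma_\bx = L_\bx + L_\by$ and $\sigma_\by = L_\by$ so that the positive $+\frac{L_\by}{2}\|\bx^k - \bx^{k+1}\|^2$ error in \eqref{lm:SEG-step} is exactly cancelled by the $-\frac{\sigma_\bx - L_\bx}{2}\|\bx^k - \bx^{k+1}\|^2$ descent in Lemma \ref{lemma:N-ADMM-step}, while the $-\frac{\sigma_\by - L_\by}{2}(\cdot)$ term vanishes. The choice $\gamma = \frac{1}{N(N-1)}\min_{i\geq 2}\{\mu_i/\|A_i\|^2\}$ makes every coefficient $\mu_i - \gamma N(N-1)\|A_i\|^2$ nonnegative, so those partial-strong-convexity terms can simply be dropped. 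What remains on the right-hand side is a sum of differences in $\frac{1}{2\gamma}\|\lambda-\lambda^k\|^2$, $\frac{\sigma_\bx}{2}\|\bx - \bx^k\|^2$, $\frac{\sigma_\by}{2}\|\by - \by^k\|^2$, and the per-split penalties $\frac{\gamma}{2}\|\cA(\bx_{1:j-1},\bx_{j:N}^k)-a\|^2$.

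Next I would sum over $k = 0, \dots, T-1$. Each of the four quadratic difference terms telescopes cleanly; for the per-split penalty, fixing $j$ and summing in $k$ gives $\|\cA(\bx_{1:j-1},\bx_{j:N}^0)-a\|^2 - \|\cA(\bx_{1:j-1},\bx_{j:N}^T)-a\|^2$, which is upper-bounded by the initial term. I would then use $A\bx = a$ to rewrite $\cA(\bx_{1:j-1},\bx_{j:N}^0) - a = \sum_{i=j}^N A_i(x_i^0 - x_i)$, apply Cauchy--Schwarz and the diameter bound $\|x_i^0 - x_i\|\leq D_{\cX_i}$, and switch the order of summation over $(i,j)$ to get a combined bound of the form $\gamma\sum_{j=2}^N c_{N,j}\|A_j\|^2 D_{\cX_j}^2$ for explicit constants $c_{N,j} = \cO(N\cdot j)$, matching the $(2N+j)(j-1)$ order in the statement. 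Dividing by $T$ and applying Jensen's inequality together with the convex-concavity of $\Phi$ (Assumption \ref{assumption:Convexity-1sided}) converts the ergodic averages $\bar\bx,\bar\by$ into a bound on $\Phi(\bar\bx,\by) - \Phi(\bx,\bar\by) - \langle\lambda, A\bar\bx - a\rangle$, exactly as in the proof of Theorem \ref{theorem:complexity-ssg}.

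Finally I would take $\bx = \argmin_{\bx\in\cX,\,A\bx=a}\Phi(\bx,\bar\by)$, $\by = \argmax_{\by\in\cY}\Phi(\bar\bx,\by)$, and $\lambda = -\rho\,(A\bar\bx - a)/\|A\bar\bx - a\|$, which turns $-\langle\lambda, A\bar\bx - a\rangle$ into $\rho\|A\bar\bx - a\|$ and contributes $\|\lambda - \lambda^0\|^2 = \rho^2$; together with $\lambda^0 = 0$, the dual term in the bound becomes a multiple of $\rho^2/\gamma$. Then Lemma \ref{lemma:opt-cond} yields the $\cO(\epsilon^{-1})$ iteration complexity after substituting the choice of $\gamma$. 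The main obstacle I anticipate is bookkeeping: carefully tracking the cross-terms in the summation-reversal step for the penalty bound, and verifying that the cancellation between the $\bx$-block error and the $\by$-block error indeed requires no slack beyond what the $\sigma_\bx - L_\bx = L_\by$ identity provides; everything else is analogous to the $N=2$ case already proved.
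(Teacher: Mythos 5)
Your proposal is correct and follows exactly the route the paper intends: the paper omits the proof of Theorem \ref{theorem:ADMM-SC}, stating only that it follows by combining Lemma \ref{lemma:SEG-step} with Lemma \ref{lemma:N-ADMM-step} (taking $\tilde\by = \hat\by^{k+1}$, $\sigma = \sigma_\bx$), which is precisely your plan. Your cancellation of the $\frac{L_\by}{2}\|\bx^k-\bx^{k+1}\|^2$ term, the choice of $\gamma$ to drop the partial-strong-convexity terms, the telescoping and diameter bound on the per-split penalties, and the final substitution of $\lambda$, $\bx$, $\by$ all match the template of Theorems \ref{theorem:complexity-ssg} and \ref{theorem:complexity}.
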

When Assumption \ref{assumption:strongly-convex} does not hold, we can apply the strongly convex $\epsilon$-perturbation strategy of \cite{lin2016iteration}, where problem \eqref{prob:1-sided} is modified as 
\begin{equation}
	\label{prob:perturb}
	\min_{\bx\in\cX} \max_{\by\in\cY} \Phi_\epsilon(\bx, \by) := \Phi(\bx,\by)+ \frac{\epsilon}{2}\sum_{i=2}^N\|x_i-x_i^0\|^2\quad\mbox{s.t.}\quad A\bx = a.
\end{equation}
Therefore, $\Phi_\epsilon$ satisfies Assumption \ref{assumption:strongly-convex}. By properly choosing the parameters, we have the following corollary. 
\begin{corollary}
	\label{corollary:SEG-ADMM-pert}
	Suppose Assumptions \ref{assumption:Convexity-1sided} and \ref{assumption:Lipschitz-1sided} hold, and $N\geq3$. Suppose  $(\bar{\bx}, \bar{\by})$ is generated by applying  Algorithm \ref{alg:SEG-ADMM} to the perturbed problem \eqref{prob:perturb} for $T$ iterations, with the parameters chosen as $\epsilon = \cO(1/\sqrt{T})$, $\gamma =  \frac{\epsilon}{N(N-1)\cdot \max_{2\leq i\leq N}\{\|A_i\|^2\}}$ and $H_i \succeq (L_\bx+L_\by)I, i=1,\cdots,N$, and $G\succeq L_\by I$.  
	Then for any $\rho>0$, we have 
	\begin{eqnarray*} 
		\max_{\by\in \cY} \Phi(\bar {\bx},\by)  -  \min_{\overset{\bx\in\cX}{A\bx = a}} \Phi(\bx,\bar \by)  + \rho\|A\bar{\bx} - a\| \leq \cO\left(\frac{1}{\sqrt{T}}\right).
	\end{eqnarray*}
	Therefore, it takes $T = \cO(\epsilon^{-2})$ iterations to reach an $\epsilon$-saddle point. 
\end{corollary}
\begin{proof}
	As a direct corollary of Theorem \ref{theorem:ADMM-SC}, we have
	$$\max_{\by\in\cY} \Phi_\epsilon(\bar {\bx},\by) - \min_{\overset{\bx\in\cX}{A\bx = a}} \Phi_\epsilon(\bx,\bar \by)  +  \rho\|A\bar{\bx} -a\| \leq \cO\left(\frac{1}{\sqrt{T}}\right).$$
	Then by the compactness of $\cX_i$'s and $\epsilon = \cO(1/\sqrt{T})$, we know that 
	$$\left|\max_{\by\in \cY} \Phi_\epsilon(\bar {\bx},\by)-\max_{\by\in \cY} \Phi(\bar {\bx},\by)\right|\leq \cO\left(\frac{\sum_{i=2}^ND_i^2}{2\sqrt{T}}\right) $$
	and 
	$$\left|\min_{\overset{\bx\in\cX}{A\bx = a}}\Phi_\epsilon(\bx,\bar \by)-\min_{\overset{\bx\in\cX}{A\bx = a}}\Phi(\bx,\bar \by)\right|\leq \cO\left(\frac{\sum_{i=2}^ND_i^2}{2\sqrt{T}}\right),$$
	combining the above inequalities proves the corollary. 
\end{proof}

Note that such perturbation significantly deteriorates the convergence rate of the $\bx$ variable. Therefore, it is not necessary to apply the extra-gradient step to the $\by$-update to accelerate the convergence of $\by$ variable. We can also directly apply the  SSG-ADMM method to the perturbed problem, which still yields the $\cO(1/\sqrt{T})$ convergence rate without requiring the differentiability of $\Phi(\bx,\cdot)$. We summarize the result in the following corollary without a proof. 
\begin{corollary}
	\label{corollary:SSG-ADMM-pert}
	Suppose Assumptions \ref{assumption:Convexity-1sided} and \ref{assumption:bounded-supgrad-1sided} hold, and $N\geq3$. Suppose  $(\bar{\bx} ,\bar{\by})$ is generated by running   Algorithm \ref{alg:SSG-ADMM} to the perturbed problem \eqref{prob:perturb} after $T$ iterations, with the parameters chosen as $\epsilon = \cO(1/\sqrt{T})$, $\gamma =  \frac{\epsilon}{N(N-1)\cdot \max_{2\leq i\leq N}\{\|A_i\|^2\}}$ and $H_i \succeq L_\bx I, i = 1,\cdots,N$, and $G\succeq L_\by I$.  
	Then for any $\rho>0$, we have 
	\begin{eqnarray*} 
		\max_{\by\in \cY} \Phi(\bar {\bx},\by)  -  \min_{\overset{\bx\in\cX}{A\bx = a}} \Phi(\bx,\bar \by)  + \rho\|A\bar{\bx} - a\| \leq \cO\left(\frac{1}{\sqrt{T}}\right).
	\end{eqnarray*}
	Therefore, it takes $T=\cO(\epsilon^{-2})$ iterations to reach an $\epsilon$-saddle point. 
\end{corollary}

\section{Proof of Lemma \ref{lemma:2-ADMM-step}}
\label{appdx:lemma:2-admm-step}
\begin{proof}
	First, the KKT condition of  block $x_1^{k+1}$:
	\begin{eqnarray}
		x_1^{k+1} &=& \argmin_{x_1\in\cX_1} \,\, h_1(x_1,\tilde\by) + \langle\nabla_{x_1}\Psi(\bx^k,\tilde{\by}), x_1 - x_1^k\rangle - \langle\lambda^k,A_1x_1+A_2x_2^k-a\rangle \nonumber\\
		&&\qquad\quad\,\,+ \frac{\gamma}{2}\|A_1x_1+A_2x_2^k-a\|^2 + \frac{1}{2}\|x_1-x_1^k\|^2_{H_1}\nonumber
	\end{eqnarray}
	gives
	\begin{equation*}
		\langle u_1 + \nabla_{x_1}\Psi(\bx^k,\tilde{\by}) -  A_1^\top\lambda^k + \gamma A_1^\top(A_1x_1^{k+1} + A_2x_2^k-a) + H_1(x_1^{k+1}-x_1^k),x_1-x_1^{k+1}\rangle\geq0, 
	\end{equation*}
	for $\forall x_1\in\cX_1$, where $u_1\in\partial_{x_1} h_1(x_1^{k+1},\tilde{\by})$.  Applying the update rule of $\lambda^{k+1}$ and rearranging the terms indicates for $\forall x_1\in\cX_1$ that 
	\begin{align}
		\label{lm:ADMM-step-1}
		\quad\quad\langle u_1 + \nabla_{x_1}\Psi(\bx^k,\tilde{\by}),x_1^{k+1}-x_1\rangle \leq &\,\, \langle \gamma A_1^\top A_2(x_2^k-x_2^{k+1}) - A_1^\top\lambda^{k+1},x_1-x_1^{k+1}\rangle\nonumber\\
		&\,\, +  \langle H_1(x_1^{k+1}-x_1^k),x_1-x_1^{k+1}\rangle.
	\end{align} 
	Similarly, the KKT condition of the block $x_2^{k+1}$ gives 
	\begin{eqnarray}
		\label{lm:ADMM-step-2}
		&&\langle u_2 + \nabla_{x_2}\Psi(\bx^k,\tilde{\by}), x_2^{k+1}-x_2\rangle \\
		&\leq&  - \langle A_2^\top\lambda^{k+1},x_2\!-\!x_2^{k+1}\rangle \!+\! \langle H_2(x_2^{k+1}\!-\!x_2^k), x_2 \!-\! x_2^{k+1}\rangle, \forall x_2\in\cX_2\nonumber,
	\end{eqnarray}
	where $u_2\in\partial_{x_2} h_2(x_2^{k+1},\tilde{\by})$.
	Note that we have assumed that $x_1$ in \eqref{lm:ADMM-step-1} and $x_2$ in \eqref{lm:ADMM-step-2} satisfy $A_1x_1+A_2x_2-a = 0$. Summing up the above two inequalities and applying the convexity of $\Phi(\cdot,\tilde{\by})$ yields  
	
	\begin{eqnarray}
		\label{lm:ADMM-step-3}
		& & \Phi\left(\bx^{k+1},\tilde{\by}\right) - \Phi\left(\bx,\tilde \by\right)\nonumber\\
		& =  & h_1\left(x_1^{k+1}\!,\tilde{\by}\right) \!+\! h_2\left(x_2^{k+1}\!,\tilde{\by}\right) \!+\! \Psi(\bx^{k+1}\!,\tilde{\by})\!-\! h_1\left(x_1,\tilde{\by}\right) \!-\! h_2\left(x_2,\tilde{\by}\right) \!-\! \Psi(\bx,\tilde{\by})\nonumber\\
		& \overset{(i)}{\leq} & \left\langle u_1,x_1^{k+1}\!-\!x_1\right\rangle \!+\! \left\langle u_2, x_2^{k+1}\!-\!x_2\right\rangle \!+\! \Psi(\bx^{k+1}\!,\tilde{\by}) \!-\! \Psi(\bx^{k}\!,\tilde{\by}) \!+\! \Psi(\bx^{k},\tilde{\by}) \!-\! \Psi(\bx,\tilde{\by})\nonumber\\	
		& \overset{(ii)}{\leq} & \left\langle u_1 \!+\! \nabla_{\!x_1}\!\Psi(\bx^k\!,\tilde{\by}),x_1^{k+1}\!-\!x_1\right\rangle \!+\! \left\langle u_2 \!+\! \nabla_{x_2}\Psi(\bx^k\!,\tilde{\by}), x_2^{k+1}\!-\!x_2\right\rangle \!+\!\frac{L_\bx}{2}\|\bx^{k+1}\!-\!\bx^k\|^2\nonumber\\		 
		& \leq & \left\langle \gamma A_1^\top\! A_2\left(x_2^k\!-\!x_2^{k+1}\right) \!-\! A_1^\top\lambda^{k+1},x_1\!-\!x_1^{k+1}\right\rangle \!-\! \left\langle A_2^\top\lambda^{k+1},x_2\!-\!x_2^{k+1}\right\rangle \nonumber\\
		& & \!+   \!\left(x_1^{k+1}\!-\!x_1^k\right)^{\!\!\top}\!\!H_1\!\left(x_1\!-\!x_1^{k+1}\right) \!+\! \!\left(x_2^{k+1}\!-\!x_2^k\right)^{\!\!\top}\!\!H_2\!\left(x_2 \!-\! x_2^{k+1}\right)\!+\!\frac{L_\bx}{2}\|\bx^{k+1}\!-\!\bx^k\|^2\nonumber\\
		& \overset{(iii)}{=} & \lambda^{\!\top}\!\!\left(A_1x_1^{k+1}\!+\!A_2x_2^{k+1}\!-\!b\right)\! + \! \frac{1}{\gamma}\left(\lambda\!-\!\lambda^{k+1}\right)^{\!\!\top}\!\!\left(\lambda^{k+1}\!-\!\lambda^k\right)\! + \!\left(x_1^{k+1}\!-\!x_1^k\right)^{\!\!\top}\!\!H_1\!\left(x_1\!-\!x_1^{k+1}\right)  \nonumber\\
		& & \qquad+\gamma\!\left(x_1\!-\!x_1^{k+1}\right)^{\!\!\top}\!\! A_1^\top\! A_2\!\left(x_2^k\!-\!x_2^{k+1}\right) + \left(x_2^{k+1}\!-\!x_2^k\right)^{\!\top}\!\!H_2\!\left(x_2 \!-\! x_2^{k+1}\right),
	\end{eqnarray}
	where (i) is due to the convexity of $h_1(\cdot,\tilde{\by})$ and $h_2(\cdot,\tilde{\by})$, (ii) is because 
	$$\Psi(\bx^{k+1},\tilde{\by}) - \Psi(\bx^{k},\tilde{\by})\leq \langle \nabla_{\bx}\Psi(\bx^k,\tilde{\by}), \bx^{k+1}-\bx^k \rangle + \frac{L_\bx}{2}\|\bx^{k+1}-\bx^k\|^2$$ 
	$$\Psi(\bx^{k},\tilde{\by}) - \Psi(\bx,\tilde{\by})\leq\langle \nabla_{\bx}\Psi(\bx^k,\tilde{\by}),\bx^k-\bx\rangle,$$
	and (iii) is because $A_1x_1+A_2x_2=a$.   
	Note that for any vectors $a,b,c,d$, and matrix $H\succ0$, it holds that 
	\begin{eqnarray}
		\left(a-b\right)^\top\!\! H(c-d) & = & \left(\sqrt{H}a-\sqrt{H}b\right)^\top \left(\sqrt{H}c-\sqrt{H}d\right)\nonumber\\
		& = & \frac{1}{2}\left(\|a-d\|^2_H - \|a-c\|^2_H\right) + \frac{1}{2}\left(\|c-b\|^2_H - \|d-b\|^2_H\right)
	\end{eqnarray} 
	We can bound the terms in \eqref{lm:ADMM-step-3} by 
	\begin{align*}
		\left(\lambda\!-\!\lambda^{k+1}\right)^{\!\top}\!\!&\left(\lambda^{k+1} - \lambda^k\right) = \frac{1}{2}\left(\|\lambda\!-\!\lambda^k\|^2 \!-\! \|\lambda\!-\!\lambda^{k+1}\|^2 \!-\!\|\lambda^k\!-\!\lambda^{k+1}\|^2\right)\\
		&=  \frac{1}{2}\left(\|\lambda-\lambda^k\|^2 - \|\lambda-\lambda^{k+1}\|^2\right) - \frac{\gamma^2}{2}\|A_1x_1^{k+1}+A_2x_2^{k+1}-a\|^2,
	\end{align*} 
	\begin{eqnarray*}
		\left\langle H_1\left(x_1^{k+1}-x_1^k\right),x_1-x_1^{k+1}\right\rangle = \frac{1}{2}\left(\|x_1-x_1^{k}\|^2_{H_1} - \|x_1-x_1^{k+1}\|^2_{H_1} - \|x^k_1-x_1^{k+1}\|^2_{H_1}\right),
	\end{eqnarray*} 
	\begin{eqnarray*}
		\left\langle H_2(x_2^{k+1}-x_2^k),x_2-x_2^{k+1}\right\rangle & = & \frac{1}{2}\left(\|x_2-x_2^{k}\|^2_{H_2} - \|x_2-x_2^{k+1}\|^2_{H_2} - \|x_2^k-x_2^{k+1}\|^2_{H_2}\right),
	\end{eqnarray*}
	and
	\begin{eqnarray*}
		(x_1-x_1^{k+1})^\top A_1^\top A_2(x_2^k-x_2^{k+1})
		& = & \left[(A_1x_1-a) - (A_1x_1^{k+1}-a)\right]^\top\left[ - A_2x_2^{k+1} - (-A_2x_2^k)\right]\nonumber  \\
		& = & \frac{1}{2}\|A_1x_1+A_2x_2^k-a\|^2 - \frac{1}{2}\|A_1x_1+A_2x_2^{k+1}-a\|^2 \\
		& & + \frac{1}{2}\|A_1x_1^{k+1} + A_2x_2^{k+1}-a\|^2 - \frac{1}{2}\|A_1x_1^{k+1}+A_2x_2^k-a\|^2.\nonumber
	\end{eqnarray*}
	Substituting the above bounds into \eqref{lm:ADMM-step-3} proves the Lemma.
\end{proof}

\section{Proof of Lemma \ref{lemma:SG-step}}
\label{appdx:lemma:SG-step} 
\begin{proof}
	First, the KKT condition of 
	$\min_{\by\in \cY}\,\,\frac{1}{2} \left\|y - [y^k + G^{-1}\cdot u^k] \right\|^2_G$ gives
	\begin{eqnarray} 
		\label{lm:SG-step-1}
		\langle u^k, \by-\by^{k+1}\rangle &\leq& \langle G( \by^{k+1}-\by^k),\by- \by^{k+1}\rangle, \forall \by\in\cY.
	\end{eqnarray}
	By the concavity of $\Phi(\bx^{k+1},\cdot)$ and the fact that $u^k\in\partial_\by \Phi(\bx^{k+1},\by^k)$, we have 
	\begin{align}
		\label{lm:SG-step-2}
		\Phi(\bx^{k+1},\by) - \Phi(\bx^{k+1}&,\by^k)  \leq  \langle u^k,\by-\by^k\rangle\\
		= & \langle u^k,\by-\by^{k+1}\rangle + \langle u^k,\by^{k+1}-\by^k\rangle\nonumber\\
		\leq & \langle u^k,\by-\by^{k+1}\rangle + \frac{1}{2}\|u^k\|^2_{G^{-1}}  + \frac{1}{2}\|\by^{k+1}-\by^k\|^2_G\nonumber.
	\end{align} 
	We also have the following identity that 
	\begin{equation}
		\label{lm:SG-step-3}
		\langle G(\by^{k+1}-\by^k),\by-\by^{k+1}\rangle =  \frac{1}{2}\left(\|\by-\by^{k}\|^2_G - \|\by-\by^{k+1}\|^2_G - \|\by^k-\by^{k+1}\|^2_G\right).
	\end{equation}
	Combining \eqref{lm:SG-step-1}, \eqref{lm:SG-step-2} and \eqref{lm:SG-step-3}, we have \vspace{-0.05cm} 
	\begin{equation*}
		\Phi(\bx^{k+1},\by) - \Phi(\bx^{k+1},\by^k) \leq \frac{1}{2}\|\by-\by^{k}\|^2_G - \frac{1}{2}\|\by-\by^{k+1}\|^2_G + \frac{1}{2}\|u^k\|^2_{G^{-1}},\vspace{-0.05cm} 
	\end{equation*}
	which proves the lemma.
\end{proof}

\section{Proof of Lemma \ref{lemma:SEG-step}}
\label{appdx:lemma:SEG-step}
\begin{proof}
	The optimality  of 
	$\hat \by^{k+1} \!\!=\!\! \argmax_{\by\in \cY}\langle\nabla_\by \Phi(\bx^k\!,\!\by^k),\by\!-\!\by^k\rangle \!-\! \frac{1}{2}\|\by\!-\!\by^k\|^2_G$
	gives
	\begin{equation*} 
		-\langle\nabla_\by \Phi(\bx^k\!,\!\by^k),\hat \by^{k+1}-\by\rangle \leq \langle G(\hat \by^{k+1}-\by^k),\by-\hat \by^{k+1}\rangle, \quad\forall \by\in\cY.
	\end{equation*}
	By setting $\by = \by^{k+1}$ in the above inequality yields
	\begin{equation}
		\label{lm:SEG-step-1}
		-\langle\nabla_\by \Phi(\bx^k,\by^k),\hat \by^{k+1}-\by^{k+1}\rangle \leq \langle G(\hat \by^{k+1}-\by^k),\by^{k+1}-\hat \by^{k+1}\rangle.
	\end{equation}
	The optimality  of $\by^{k+1} = \argmax_{\by\in \cY}\langle\nabla_\by \Phi(\bx^{k+1},\hat \by^{k+1}),\by-\by^k\rangle - \frac{1}{2}\|\by- \by^k\|^2_G$
	gives
	\begin{eqnarray}
		\label{lm:SEG-step-2}
		-\langle\nabla_\by \Phi(\bx^{k+1},\hat \by^{k+1}),\by^{k+1}-\by\rangle \leq \langle G(\by^{k+1} - \by^k),\by - \by^{k+1}\rangle,\,\,\,\,\forall \by\in\cY.
	\end{eqnarray}
	The concavity of $\Phi(\bx^{k+1},\cdot)$ indicates that 
	\begin{eqnarray}
		& & \Phi(\bx^{k+1},\by)  - \Phi(\bx^{k+1},\hat \by^{k+1})\nonumber\\
		&\leq& -\langle \nabla_\by \Phi(\bx^{k+1},\hat \by^{k+1}),\hat \by^{k+1}-\by\rangle\nonumber\\
		& = & -\langle \nabla_\by \Phi(\bx^{k+1},\hat \by^{k+1}),\by^{k+1} - \by\rangle  -  \langle \nabla_\by \Phi(\bx^{k}, \by^{k}),\hat \by^{k+1} - \by^{k+1}\rangle\nonumber\\
		& &  + \langle \nabla_\by \Phi(\bx^{k}, \by^{k}) - \nabla_\by \Phi(\bx^{k+1},\hat \by^{k+1}),\hat \by^{k+1}-\by^{k+1}\rangle\nonumber\\
		& \leq & -\langle \nabla_\by \Phi(\bx^{k+1},\hat \by^{k+1}),\by^{k+1} - \by\rangle  -  \langle \nabla_\by \Phi(\bx^{k}, \by^{k}),\hat \by^{k+1} - \by^{k+1}\rangle\nonumber\\
		& &  + \frac{1}{2L_\by}\|\nabla_\by \Phi(\bx^{k}, \by^{k}) - \nabla_\by \Phi(\bx^{k+1},\hat \by^{k+1})\|^2 + \frac{L_\by}{2}\|\hat \by^{k+1}-\by^{k+1}\|^2\nonumber\\
		& \overset{(i)}{\leq}  &\langle G(\hat \by^{k+1}-\by^k),\by^{k+1}-\hat \by^{k+1}\rangle+ \langle G(\by^{k+1}-\by^k),\by-\by^{k+1}\rangle\nonumber\\
		&& + \frac{L_\by}{2}\cdot\Big(\|\bx^k - \bx^{k+1}\|^2 + \|\by^k - \hat\by^{k+1}\|^2 + \|\by^{k+1} - \hat{\by}^{k+1}\|^2\Big) \nonumber\\
		& = & \frac{1}{2}\left(\|\by-\by^k\|^2_G - \|\by-\by^{k+1}\|^2_G\right) +  \frac{L_\by}{2}\|\bx^{k+1}-\bx^k\|^2 \nonumber\\
		&&- \frac{1}{2}\Big(\|\hat \by^{k+1}-\by^{k}\|^2_{G-L_\by I} +\|\hat \by^{k+1}-\by^{k+1}\|^2_{G-L_\by I}\Big) ,\nonumber
	\end{eqnarray}
	where (i) is due to Assumption \ref{assumption:Lipschitz-1sided}, and \eqref{lm:SEG-step-1} and \eqref{lm:SEG-step-2}. This completes the proof. 
\end{proof}

\end{document}